\newtheorem{thm}{Theorem}%[section]
\newtheorem{claim}[thm]{Claim}
\newtheorem{conj}[thm]{Conjecture}
\newtheorem{cor}[thm]{Corollary}
\newtheorem{prop}[thm]{Proposition}
\newtheorem{theorem}[thm]{Theorem}
\newtheorem{example}[thm]{Example}
\newtheorem*{example*}{Example}
\newtheorem*{definition*}{Definition}
\newtheorem*{remark*}{Remark}
\newtheorem{question}[thm]{Question}
\newcommand*{\myproofname}{Proof}
\def\qed{\hfill\ifhmode\unskip\nobreak\fi\qquad\ifmmode\Box\else\hfill$\Box$\fi}
\title{Packing $(1,1,2,4)$-coloring of subcubic outerplanar graphs}
\date{\today}
\author{
Alexandr Kostochka \thanks{Department of Mathematics, University of Illinois at Urbana--Champaign, IL, USA and
Sobolev Institute of Mathematics, Novosibirsk 630090, Russia, kostochk@math.uiuc.edu. Research of this author is supported in part by NSF grant
 DMS-1600592,  NSF RTG Grant DMS-1937241,  Award RB17164 of the UIUC Campus Research Board, and  grant 19-01-00682 of the Russian Foundation for Basic Research.}
 \and Xujun Liu\thanks{Coordinated Science Laboratory, University of Illinois at Urbana--Champaign, IL, USA, xliu150@illinois.edu; the work was partially done while X. Liu was a PhD student at Department of Mathematics, University of Illinois at Urbana--Champaign. Research of this author was supported by Emerging Frontiers of Science of Information (A2209)1-482031-239010-191100.
}
 }
\begin{document}
	\maketitle

\begin{abstract}
For $1\leq s_1 \le s_2 \le \ldots \le s_k$ and a graph $G$, {a {\em packing $(s_1, s_2, \ldots, s_k)$-coloring} of $G$ is a partition of $V(G)$ into sets $V_1, V_2, \ldots, V_k$ such that, for each $1\leq i \leq k$,} the distance between any two distinct $x,y\in V_i$ is at least $s_i + 1$. The {\em packing chromatic number}, $\chi_p(G)$, of a graph $G$ is the smallest $k$ such that $G$ has a packing $(1,2, \ldots, k)$-coloring. It is known that there are trees of maximum degree 4 and subcubic graphs $G$
with arbitrarily large $\chi_p(G)$. Recently, there was a series of papers on packing $(s_1, s_2, \ldots, s_k)$-colorings of subcubic graphs in various classes.
 We show that every $2$-connected subcubic outerplanar graph has a packing $(1,1,2)$-coloring and every subcubic outerplanar graph is packing $(1,1,2,4)$-colorable.
  Our results are sharp in the sense that there are $2$-connected subcubic outerplanar graphs that are not packing $(1,1,3)$-colorable and there are subcubic outerplanar graphs that are not packing $(1,1,2,5)$-colorable. We also show subcubic outerplanar graphs that are not packing $(1,2,2,4)$-colorable and not packing $(1,1,3,4)$-colorable.
\\
\\
 {\small{\em Mathematics Subject Classification}: 05C15, 05C35.}\\
 {\small{\em Key words and phrases}:  packing coloring, cubic graphs, independent sets.}
\end{abstract}

\section{Introduction}	

For a non-decreasing sequence $S=(s_1,s_2,...,s_k)$ of positive integers, a {\em packing $S$-coloring} of a graph $G$ is a partition of $V(G)$ into sets $V_1,...,V_k$ such that olor{blue}{, for each $1 \le i \le k$,} the distance between any two distinct $x,y \in V_i$ is at least $s_i+1$. A {\em packing $k$-coloring} is  a packing $(1,2,...,k)$-coloring. The {\em packing chromatic number}, $\chi_p(G)$ (we will use the abbreviation PCN for short), of a graph $G$ is the minimum $k$ such that $G$ has a packing $k$-coloring.

Packing $k$-coloring was introduced in 2008 by Goddard,  Hedetniemi, Hedetniemi,  Harris and  Rall~\cite{GHHHR}
%(under the name {\em broadcast coloring})
motivated by frequency assignment problems in broadcast networks.
There are more than 50 papers on the topic
(e.g.~\cite{ANT1,BF, BFKR, BKR1,BKR2,BKRW1,BKRW2,BKRW3,CJ1, FG1,FKL1,G1,GT1, GHT, KV1, LBS2, S1, TV1} and references in them). In particular, Fiala and Golovach~\cite{FG1} proved that finding the PCN of a graph is NP-complete even in the class of trees. Sloper~\cite{S1} showed that the infinite complete ternary tree (every vertex has $3$ child vertices) has unbounded PCN.

The question  whether PCN is bounded in the class of subcubic graphs
was discussed in several papers (e.g., in~\cite{BKRW1,BKRW2,GT1,LBS1,S1}) and answered in the negative 
in~\cite{BKL}. Bre\v sar and Ferme~\cite{BF} later provided an explicit family of subcubic graphs with unbounded PCN. This stimulated studying subclasses of subcubic graphs with bounded PCN.

One of the studied classes was the class of subdivisions of subcubic graphs. Recall that
the {\em subdivision,} $D(G)$, of a graph $G$ is the graph obtained from $G$ by replacing each edge by a path of length two. 
In particular, Gastineau and Togni~\cite{GT1} asked whether $\chi_p(D(G))\leq 5$ for every subcubic graph $G$ and Bre\v sar, Klav\v zar, Rall, and Wash~\cite{BKRW2} later conjectured this.
%(i.e., a graph with maximum degree at most $3$)

\begin{conj}[Bre\v sar, Klav\v zar, Rall, and Wash~\cite{BKRW2}]\label{conjbkrw}
Let $G$ be a subcubic graph. Then $\chi_p(D(G))\leq 5$.
\end{conj}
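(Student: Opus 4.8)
We outline the natural line of attack on Conjecture~\ref{conjbkrw}. Work inside $H:=D(G)$, which is bipartite with parts $B$, the original ("branch") vertices of $G$, and $S=\{s_e:e\in E(G)\}$, the subdivision vertices (each of degree exactly $2$); both $B$ and $S$ are independent in $H$, and the metric is "doubled", $d_H(u,v)=2d_G(u,v)$ for $u,v\in B$ and $d_H(s_e,s_f)=2d_{L(G)}(e,f)$ for $s_e,s_f\in S$. Because a packing $(1,2,3,4,5)$-coloring restricts to every subgraph, standard reductions (subgraph monotonicity, embedding a subcubic graph into a cubic one, and surgery along bridges) show it suffices to prove the bound when $G$ is $2$-edge-connected and cubic; then Petersen's theorem gives a $2$-factor $C$ together with the complementary perfect matching $M=E(G)\setminus E(C)$.

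The crudest scheme is to color every subdivision vertex of $H$ with $1$; this is legal because $S$ is independent, and it forces every branch vertex off color $1$, so — using $d_H|_B=2d_G$ — the problem reduces to coloring $V(G)$ by $\{2,3,4,5\}$ with colors $2,3$ independent sets of $G$ and colors $4,5$ sets of vertices pairwise at $G$-distance at least $3$; equivalently, to finding $R\subseteq V(G)$ with $G-R$ bipartite and $R$ a disjoint union of two sets each independent in $G^2$. This already works for many cubic graphs, but not all: it fails for $H=D(P)$ with $P$ the Petersen graph, since $P$ cannot be made bipartite by deleting fewer than three vertices while $P^2=K_{10}$, so no admissible $R$ exists. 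Hence in general one must push only part of $S$ onto color $1$, and this is where the $2$-factor is used: color all matching subdivision vertices $\{s_{uv}:uv\in M\}$ with $1$, and color each even cycle $D(C_\ell)$ of $D(C)$ by a short periodic pattern — $\{2,3\}$ on its branch vertices, $\{1,2,3\}$ on its cycle subdivision vertices — invoking colors $4,5$ only to close a period that does not divide $\ell$, to deal with short or odd cycles of $C$, and to separate two branch vertices that a matching edge has placed within distance $2$.

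The step I expect to be the real obstacle is exactly that last bookkeeping. Colors $4$ and $5$ are scarce — within $B$ a color-$4$ class is a set of vertices pairwise at $G$-distance $\ge 3$, and within $S$ a color-$5$ class is an induced matching of $G$ — so the bad pairs cannot be repaired one at a time; and those bad pairs are not confined to the cycles of $D(C)$, because every matching edge $uv$ inserts a length-$2$ shortcut through $s_{uv}$ between possibly far-apart, possibly cross-cycle branch vertices $u$ and $v$. A correct argument would therefore have to choose the $2$-factor $C$ with care (long cycles, no short cycles, the matching spread out) and run a discharging or local-recoloring analysis bounding, per cycle and per matching edge, how often colors $4$ and $5$ are forced, after which a finite list of small graphs is handled by hand; an advantage of this route over ones based on Vizing's theorem is that it needs no separate treatment of Class-$2$ cubic graphs, since a $2$-factor exists in every $2$-edge-connected cubic graph.
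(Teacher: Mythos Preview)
This statement is a \emph{conjecture}, not a theorem: the paper does not prove it, and it remains open in general. The paper only confirms it for the subclass of subcubic \emph{outerplanar} graphs, as a corollary of Theorem~\ref{1124} via Proposition~\ref{extension}. So there is no ``paper's own proof'' to compare against.

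Your proposal is, by your own account, not a proof either: it is a sketch of a strategy with the decisive step left undone. The reduction to $2$-edge-connected cubic $G$ and the observation that putting all of $S$ in color~$1$ reduces to a $(2,3,4,5)$-packing of $G$ are fine, and your Petersen analysis correctly shows that this crude scheme cannot work in general. But the alternative scheme---color $\{s_e:e\in M\}$ with $1$ and run periodic patterns on the cycles of $D(C)$, patching with colors $4$ and $5$---is precisely where the content lies, and you do not carry it out. The difficulty you name is real and not merely ``bookkeeping'': every matching edge $uv\in M$ puts $u$ and $v$ at $H$-distance $2$, so the colorings you build independently on the components of $D(C)$ must be globally consistent across $M$, while colors $4$ and $5$ are available only on sets that are very sparse in $G$ (distance $\ge 3$, respectively an induced matching). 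There is no known way to choose the $2$-factor, the phases on the cycles, and the placement of colors $4,5$ so that all these constraints are met simultaneously for an arbitrary cubic graph; a discharging argument of the kind you gesture at has not been made to work, and your outline gives no mechanism for it. As written, the proposal is an approach, not a proof, to an open problem.
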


In 2018~\cite{BKL2} it was shown that the PCN of the subdivision of every subcubic graph is
at most $8$. Gastineau and Togni~\cite{GT1} pointed out at the following connection between the bounds on 
packing $S$-colorings of a graph $G$ and the packing $S$-colorings of $D(G)$:

\begin{prop}[Gastineau and Togni~\cite{GT1}, Proposition 1]\label{extension}
Let $G$ be a graph and $S
=(s_1,...,s_k)$ be a non-decreasing sequence of integers. If $G$ is packing $S$-colorable, then $D(G)$ is packing $(1, 2s_1+1, \ldots, 2s_k+1)$-colorable.
\end{prop}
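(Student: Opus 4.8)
The plan is to keep the given packing $S$-colouring of $G$ on the original vertices and to place every new (subdivision) vertex into a single fresh colour class of weight $1$. Write $V(D(G)) = V(G) \cup W$, where $W = \{w_e : e \in E(G)\}$ and $w_e$ is the vertex subdividing the edge $e = uv$, so that $w_e$ is adjacent in $D(G)$ exactly to $u$ and $v$. Fix a packing $S$-colouring $V_1, \ldots, V_k$ of $G$. Before defining the colouring of $D(G)$ I would record two elementary facts. First, $W$ is an independent set of $D(G)$, since each $w_e$ has both of its neighbours in $V(G)$. Second, $d_{D(G)}(u,v) = 2\,d_G(u,v)$ for all $u, v \in V(G)$: any $u$--$v$ walk in $G$ lifts to a $u$--$v$ walk in $D(G)$ of exactly twice the length, and conversely a shortest $u$--$v$ path in $D(G)$ alternates between $V(G)$ and $W$, hence has even length and projects to a $u$--$v$ walk in $G$ of half that length.

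Now define colour classes $U_0, U_1, \ldots, U_k$ of $D(G)$ by $U_0 = W$ and $U_i = V_i$ for $1 \le i \le k$; this is a partition of $V(D(G))$. The class $U_0$ is independent, so it is a valid class of weight $1$, matching the first entry of the target sequence. For $1 \le i \le k$ and distinct $x, y \in U_i = V_i$, the packing $S$-colouring of $G$ gives $d_G(x,y) \ge s_i + 1$, so $d_{D(G)}(x,y) = 2\,d_G(x,y) \ge 2s_i + 2 > 2s_i + 1$; hence $U_i$ is a valid class of weight $2s_i + 1$. Therefore $U_0, U_1, \ldots, U_k$ is a packing $(1, 2s_1+1, \ldots, 2s_k+1)$-colouring of $D(G)$, as claimed.

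There is essentially no obstacle in this argument; the only point requiring a moment's care is the distance identity $d_{D(G)}(u,v) = 2\,d_G(u,v)$, which rests on the observation that a shortest path in $D(G)$ between two original vertices must strictly alternate between original and subdivision vertices and so has even length. Note also that the estimate is not tight ($2s_i+2$ versus $2s_i+1$), so even a slightly looser bookkeeping of distances would suffice.
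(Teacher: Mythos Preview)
Your proof is correct. The paper does not include a proof of this proposition at all---it simply quotes it from Gastineau and Togni~\cite{GT1}---so there is no approach to compare against; your argument is the natural one and would be accepted as a complete proof.
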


They~\cite{GT1} also proved that the Petersen graph is not packing $(1,1,k,k')$-colorable for any $k,k' \ge 2$ and suggested the study of packing $(1,1,2,2)$-coloring to approach Conjecture~\ref{conjbkrw}. Bre\v sar, Klav\v zar, Rall, and Wash~\cite{BKRW2} later verified that the Petersen graph admits a packing $(1,2,3,4,5)$-coloring. Proposition~\ref{extension}  implies that if one can prove every subcubic graph except the Petersen graph is packing $(1,1,2,2)$-colorable then $\chi_p (D(G)) \le 5$ for every subcubic graph. Gastineau and Togni~\cite{GT1} also asked the question  whether the stronger claim holds: every subcubic graph except the Petersen graph is packing $(1,1,2,3)$-colorable. %\begin{prop}[Gastineau and Togni~\cite{GT1}]
%For any $k,k' \ge 2,$ the Petersen graph is not $(1,1,k,k')$-colorable.
%\end{prop}

%\begin{question}[Gastineau and Togni~\cite{GT1}]
%Is it true that every subcubic graph except the Petersen graph is packing $(1,1,2,3)$-colorable?
%\end{question}

The problem whether every subcubic graph except the Petersen graph is packing $(1,1,2,2)$-colorable is  intriguing by itself. Some subclasses of subcubic graphs were shown to have such a coloring. In particular, Bre\v sar, Klav\v zar, Rall, and Wash~\cite{BKRW2} showed that if $G$ is a generalized prism of a cycle, then $G$ is packing $(1,1,2,2)$-colorable if and only if $G$ is not the Petersen graph. Very recently, Liu, Liu, Rolek, and Yu~\cite{LLRY} proved that every subcubic graph with maximum average degree less than $\frac{30}{11}$ is $(1,1,2,2)$-colorable and thus confirmed Conjecture~\ref{conjbkrw} for subcubic graph $G$ with $mad(G)< \frac{30}{11}$. 

Many similar colorings have also been considered (e.g.~\cite{BKL2, BGT, GT1, GKT, GX1, GX2}). In particular, Gastineau and Togni~\cite{GT1} showed that subcubic graphs are packing $(1,2,2,2,2,2,2)$-colorable and packing $(1,1,2,2,2)$-colorable. They also showed that every $3$-irregular (has no adjacent vertices of degree $3$) subcubic graph is packing $(1,2,2,2)$-colorable and packing $(1,1,2)$-colorable. Gastineau~\cite{G1} showed that determining whether a subcubic bipartite graph is packing $(1, 2, 2)$-colorable and whether a subcubic graph is $(1,1,2)$-colorable are both NP-complete. In~\cite{BKL2} it was proved that subcubic graphs are packing $(1,1,2,2,3,3,k)$-colorable with color $k$ used at most once for each integer $k \ge 4$, and that $2$-degenerate subcubic graphs are packing $(1,1,2,2,3,3)$-colorable.  Borodin and Ivanova~\cite{BI} proved that every subcubic planar graph with girth at least $23$ has a packing $(2,2,2,2)$-coloring.

%\section{Main results}

Packing colorings of subclasses of subcubic outerplanar graphs was first studied by Gastineau, Holub, and Togni~\cite{GHT}, who showed upper bounds for PCN of $2$-connected subcubic outerplanar graphs with conditions on the number of internal faces. Recently, Bre\v sar, Gastineau and Togni~\cite{BGT} proved that the PCN of any $2$-connected bipartite subcubic outerplanar graph is bounded by $7$, which gives a partial answer to the question posed in several papers concerning the boundedness of the PCN in the
class of planar subcubic graphs. Moreover, they proved that every triangle-free subcubic outerplanar graph has a packing $(1,2,2,2)$-coloring (and thus a packing $(1,1,2,2)$-coloring) and their result is sharp in the sense that there exists a subcubic outerplanar graph with no triangles that is not packing $(1,2,2,3)$-colorable. They also showed that every bipartite outerplanar graph admits a packing $S$-coloring for $S = (1,3, \ldots, 3)$, where $3$ appears $\Delta$ (maximum degree) times. Their result is sharp in the sense that if one of the integers $3$ is replaced by $4$ in the sequence $S$, then there exist outerplanar bipartite graphs that do not admit a packing $S$-coloring. The following two interesting questions were also suggested by Bre\v sar, Gastineau and Togni~\cite{BGT} for future research. 

\begin{question}[Bre\v sar, Gastineau and Togni~\cite{BGT}]
Is the PCN bounded in the class of $2$-connected outerplanar subcubic graphs and is the PCN bounded in the class of $2$-connected bipartite planar subcubic graphs?
\end{question}

%\begin{question}[Bre\v sar, Gastineau and Togni~\cite{BGT}]

%\end{question}

In this paper, we prove that every $2$-connected subcubic outerplanar graph is packing $(1,1,2)$-colorable and every subcubic outerplanar graph is packing $(1,1,2,4)$-colorable. Our results are sharp in the sense that there is a $2$-connected subcubic outerplanar graph $G$ that is not $(1,1,3)$-colorable (see Example~\ref{113}) and not $(1,2,2)$-colorable (see Example~\ref{122}); and there are subcubic outerplanar graphs that are neither  $(1,2,2,4)$-colorable, nor $(1,1,3,4)$-colorable;
there are also subcubic outerplanar graphs that are  not $(1,1,2,5)$-colorable. % which is presented in Section~\ref{counterexample}.

%and thus extend the result of Bre\v sar et al~\cite{BGT} on packing $(1,1,2,2)$-coloring to all subcubic outerplanar graphs. 

\begin{theorem}\label{2connected}
Every $2$-connected subcubic outerplanar graph $G$ is packing $(1,1,2)$-colorable.
\end{theorem}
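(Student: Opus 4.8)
The plan is to induct on the number of vertices (or on some potential counting vertices and faces), and to exploit the structure of $2$-connected outerplanar graphs: such a graph has a Hamiltonian cycle $C$ with all other edges being non-crossing chords, so it is naturally a ``stacked'' union of internal faces glued along a tree. The key reduction is to find a small configuration — either a short ear, a face sharing a long path with the outer cycle, or a vertex of degree $2$ lying on a face with few vertices — delete or contract it, apply induction to the smaller $2$-connected subcubic outerplanar graph (taking care that the reduced graph stays $2$-connected, or handling the $1$-connected leftovers separately), and then extend the packing $(1,1,2)$-coloring to the removed piece. Throughout, I will use the colors $\{1,1',2\}$, where $1$ and $1'$ are the two ``distance $\ge 2$'' (i.e.\ ordinary independent-set) colors and $2$ is the ``distance $\ge 3$'' color; so the combined set $V_1 \cup V_{1'}$ must induce a graph with no edges inside each part (a proper $2$-coloring of an induced subgraph), while $V_2$ must be a $2$-packing (pairwise distance $\ge 3$).

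The natural engine is a discharging / minimal-counterexample argument rather than a clean single induction. I would take $G$ to be a minimal counterexample, argue it has minimum degree $2$ (a subcubic $2$-connected graph always does, since it cannot be $3$-regular and $2$-connected outerplanar — actually it can, e.g.\ $K_4$, so one checks small cases by hand), and then analyze a face $F$ of the weak dual that is a leaf of the dual tree: $F$ shares exactly one chord with the rest of the graph and all its other edges lie on the outer cycle, so $F$ contributes a path $P = v_0 v_1 \dots v_t$ of degree-$2$ vertices $v_1,\dots,v_{t-1}$ attached to the rest at $v_0,v_t$ (the endpoints of the shared chord). If $t$ is large, color the interior of $P$ greedily: this is essentially coloring a path with $\{1,1',2\}$ subject to boundary constraints at $v_0,v_t$, which is easy because on a path the colors $1,1'$ alone (alternating) work once $t$ is moderate, reserving color $2$ only where needed near the ends. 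If $t$ is small (the face is a triangle or quadrilateral sharing an edge), I must instead delete the chord or contract the path and push the problem into a smaller instance; here the danger is that removing the chord destroys $2$-connectivity, so I would instead delete one of the degree-$2$ vertices and its incident edges and re-route, or treat the union of the leaf face with its neighbor face as a single gadget.

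The main obstacle I anticipate is precisely the bookkeeping at the boundary when extending the coloring back: color $2$ is a $2$-packing, so two vertices colored $2$ must be at distance $\ge 3$, and when we glue a colored piece onto an already-colored remainder we may have two nearby $2$'s straddling the gluing vertices, or we may be forced to use $2$ on a vertex too close to a previously placed $2$. Handling this cleanly requires keeping, as part of the induction hypothesis, a strengthened statement controlling the colors (and the ``nearest $2$'') on the few boundary vertices — essentially proving a list-type or precolored-extension version of Theorem~\ref{2connected} for outerplanar graphs with a specified boundary path. So the real work is to formulate the right strengthening: something like ``for every $2$-connected subcubic outerplanar $G$ and every edge $uv$ on the outer cycle, there is a packing $(1,1,2)$-coloring in which neither $u$ nor $v$ gets color $2$'' (or a variant pinning down the colors of $u,v$), which is preserved under the reductions above. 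Once that invariant is set up correctly, the case analysis on the leaf face — triangle, quadrilateral, pentagon, or longer — becomes a finite check, and the longer faces are handled by the easy path-coloring observation. I would also dispatch the smallest $2$-connected subcubic outerplanar graphs ($C_n$, $K_4$, the ``diamond'' $K_4$ minus an edge, the prism-like small cases) directly as base cases.
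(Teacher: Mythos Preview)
Your overall strategy—minimal counterexample, leaf face of the weak dual, peel off the attached ear, extend—matches the paper's, and your ``$t$ large'' reduction is essentially the paper's first claim. But the strengthening you propose for the gluing step is false. Take the $4$-cycle $u_1u_2u_3u_4$ with extra vertices $v_1$ adjacent to $u_1,u_2$ and $v_2$ adjacent to $u_3,u_4$ (a $2$-connected subcubic outerplanar graph). Each triangle must contain a vertex of color $2$, and the only cross-triangle pair at distance $\ge 3$ is $(v_1,v_2)$; hence \emph{every} packing $(1,1,2)$-coloring has $f(v_1)=2$, and the outer-cycle edge $v_1u_1$ kills your invariant. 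More fundamentally, controlling just the two boundary colors cannot suffice: a color-$2$ vertex one step inside each piece can sit at mutual distance $2$ through the shared edge, so any workable invariant would have to govern the placement of $2$'s within distance $2$ of the boundary, and you have not said what that invariant is. (Incidentally, $K_4$ is not outerplanar; every outerplanar graph has a vertex of degree at most $2$, so the minimum-degree issue you worry about does not arise.)

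The paper avoids any strengthened hypothesis by a device your outline is missing. When a component $H$ hangs off two consecutive vertices $w,w'$ of a face, with attachment points $v,v'\in V(H)$, the paper adds the edge $vv'$ to $H$ (if absent) before invoking minimality, guaranteeing $f(v)\neq f(v')$ in the resulting coloring of $H$. The face itself is colored so that $\{f(w),f(w')\}=\{1_a,1_b\}$; then a swap $1_a\leftrightarrow 1_b$ inside $H$ always aligns the boundary, and distinct hanging components are automatically at pairwise distance $\ge 3$ through the face, so no cross-piece color-$2$ conflict can occur. Combined with a preliminary reduction showing the minimal counterexample has no even face (so that the relevant cycle is odd and carries at most one vertex of color $2$, placed away from the one large hanging component), this swap trick replaces your list-type invariant entirely and is the key idea your outline lacks.
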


% 1) there is a $2$-connected subcubic outerplanar graph $G$ such that all of the packing $(1,1,2)$-colorings use color $2$ at a specific degree two vertex; 2)

\begin{example}\label{113}
Let $G$ be the graph obtained by starting with a four cycle $C$ such that $V(C) = \{u_1,u_2,u_3,u_4\}$ and $u_iu_{i+1} \in E(G)$ for $1 \le i \le 4$ ($5 \equiv 1$). Then we add a path $u_1v_1u_2$ of length two from $u_1$ to $u_2$ and a path $u_3v_2u_4$ from $u_3$ to $u_4$. 

%1) Assume $G$ has a packing $(1,1,2)$-coloring. Since both $u_1,u_2,v_1$ and $u_3, u_4, v_2$ form triangle, at least one vertex of $u_1,u_2,v_1$ and one vertex of $u_3, u_4, v_2$ are colored with $2$ respectively. The only way to choose two vertices from each triangle to color with $2$ is to color $v_1$ and $v_2$ with $2$, since otherwise we will pick two vertices at distance at most two. 

Assume $G$ has a packing $(1,1,3)$-coloring. Since both $u_1,u_2,v_1$ and $u_3, u_4, v_2$ form triangle, at least one vertex of $u_1,u_2,v_1$ and one vertex of $u_3, u_4, v_2$ are colored with $3$ respectively. But the diameter of $G$ is $3$, a contradiction.
\end{example} 

\begin{example}\label{122}
 {Let $G$ be the same graph as used in Example~\ref{113}. We show that $G$ is not $(1,2,2)$-colorable.}

 {Assume $G$ has a packing $(1,2,2)$-coloring. Say the colors are $1, 2_a, 2_b$. Since both $u_1,u_2,v_1$ and $u_3, u_4, v_2$ form triangles, all three colors are used exactly once on $u_1,u_2,v_1$ and $u_3, u_4, v_2$ respectively. By symmetry, we assume that $u_2$ is colored with a color in $\{2_a, 2_b\}$, say $2_a$. This contradicts the fact that $2_a$ is used on the triangle $u_3v_2u_4$. }
\end{example}

\begin{theorem}\label{1124}
Every subcubic outerplanar graph has a packing $(1,1,2,4)$-coloring $f$ such that\\
%1) If $v$ is a degree two vertex with $f(v) \in \{1_a, 1_b\}$, both neighbors are degree three vertices then $f(N(v)) \neq \{2,4\}$.\\
%1) Color $4$ is not adjacent to color $2$ within each non-trivial block (block with at least three vertices).\\
(A) Color $4$ is used at most once within each block.\\
(B) if $v$ is a vertex of degree at most $2$ and is colored with $2$, then there is no vertex of color $4$ within distance two from $v$. 
\end{theorem}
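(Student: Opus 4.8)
The plan is to prove Theorem~\ref{1124} by structural induction on the number of vertices of a connected subcubic outerplanar graph $G$, strengthening the statement so that the inductive hypothesis carries enough information to be glued together across cut-vertices and across the weak dual tree of each block. Since a disconnected graph can be handled component by component (colors on distinct components never interact when $s_i=1$ for the first two classes, and for color $4$ one places the single allowed occurrence per block), we may assume $G$ is connected. If $G$ has a cut-vertex $v$, write $G = G_1 \cup G_2$ with $G_1 \cap G_2 = \{v\}$; by induction each $G_i$ has a coloring satisfying (A) and (B). The key observation is that conditions (A) and (B) are \emph{local}: within each block color $4$ appears at most once, and a degree-$\le 2$ vertex colored $2$ has no color-$4$ vertex within distance two. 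After permuting the two copies of color $1$ and adjusting near $v$, one argues the two partial colorings can be merged. The genuinely new work is therefore the $2$-connected case, i.e.\ proving the theorem for a single block.

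So assume $G$ is $2$-connected subcubic outerplanar; I would induct on the weak dual, which for an outerplanar graph is a tree $T$ whose nodes are the internal faces. Pick a leaf face $F$ of $T$; it shares exactly one edge $e=xy$ with the rest of $G$, and $F \setminus \{x,y\}$ is a path $P$ all of whose internal vertices have degree $2$ in $G$ (subcubicity). Deleting the internal vertices of $P$ and possibly suppressing degree-$2$ artifacts leaves a smaller $2$-connected outerplanar graph $G'$ (or, if $G$ itself is a single cycle, the base case), which by induction has a coloring $f'$ with (A),(B). I would then extend $f'$ to the vertices of $P$. Since the colors $1,1,2$ already handle everything at distance $\le 2$ except a single forced bottleneck, the worst case along $P$ is a very short path (a triangular or quadrilateral leaf face), and there the reserved color $4$ — used at most once in this block, which is exactly budget (A) — plugs the hole; condition (B) is maintained because we only ever put color $4$ on the internal vertices of $P$ and keep color $2$ away from them, using the degree constraint.

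The main obstacle, and where most of the case analysis will live, is the extension step for short leaf faces (triangles and $4$-faces) when the already-colored ends $x,y$ have unfavorable colors — for instance both colored $2$, or colored with the two copies of $1$ — so that the two or three new vertices of $P$ cannot be colored from $\{1,1,2\}$ alone without violating a distance constraint either inside $P$ or against neighbors of $x,y$ in $G'$. I expect to resolve this by (i) using the freedom to swap the two color classes of $1$, (ii) recoloring $x$ or $y$ locally when they have degree $\le 2$ after the surgery, and (iii) in the residual hard configurations placing the unique color $4$ of the block on an internal vertex of $P$; one must check that no earlier face of the block has already spent its color~$4$, which follows because the inductive coloring $f'$ of $G'$ has (A) and we are free to choose \emph{which} block-representative gets color $4$. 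A secondary technical point is bookkeeping when suppressing degree-$2$ vertices to form $G'$: one must make sure the distances in $G'$ that the inductive hypothesis controls are at least the distances in $G$ along the retained subgraph, so that pulling back $f'$ does not create a monochromatic pair that was legal in $G'$ but illegal in $G$; choosing $G'$ as an induced-minor obtained purely by deleting $P$'s interior (not contracting) avoids this.
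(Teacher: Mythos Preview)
Your proposal has genuine gaps in both of its main reductions.

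For the $2$-connected case you induct on the weak dual by peeling off a leaf face $F$, coloring the smaller $2$-connected graph $G'$, and extending along the path $P$. But $G'$ is still a single block, so the inductive coloring $f'$ is allowed by~(A) to use color $4$ once already. If the extension to $P$ then also needs color $4$ (which it will when $F$ is a triangle, $\{f'(x),f'(y)\}=\{1_a,1_b\}$, and some neighbor of $x$ or $y$ in $G'$ is colored $2$), you have spent color $4$ twice in one block. Your sentence ``we are free to choose which block-representative gets color~$4$'' is not part of the induction hypothesis: (A) only bounds the number of occurrences, it gives no control over their location. The paper avoids this entirely by invoking Theorem~\ref{2connected}: a single $2$-connected subcubic outerplanar block already admits a packing $(1,1,2)$-coloring, so color $4$ is never needed inside one block. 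That is the true base case, and without it your within-block induction does not close.

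For the cut-vertex reduction, writing $G=G_1\cup G_2$ with $G_1\cap G_2=\{v\}$ and ``merging after permuting $1_a,1_b$ and adjusting near $v$'' is not enough. Nothing in (A) or (B) forbids each $G_i$ from having a color-$4$ vertex at distance $2$ from $v$; after merging these two vertices are at distance $4$ in $G$, violating the distance-$5$ requirement for color $4$. Condition~(B) is not designed for symmetric gluing: it only constrains color~$4$ near a \emph{degree-$\le 2$ vertex that is colored $2$}. The paper's induction is deliberately asymmetric: it removes an entire pendant block $G_0$, colors $G'=G-G_0$ by induction, and then extends to $G_0$. The cut-edge endpoint $u_1\in V(G')$ has degree at most $2$ in $G'$, so (B) applied to $f'$ guarantees that when $f'(u_1)=2$ there is no color-$4$ vertex of $G'$ within distance $2$ of $u_1$; this is exactly what is needed to place the (single) color-$4$ vertex of the new block $G_0$ at distance $\ge 5$ from every old one. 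The actual work is then a case analysis on $f'(u_1)\in\{1_a,1_b,2,4\}$ and on the structure of the face of $G_0$ containing the cut vertex, and in one branch Theorem~\ref{2connected} is invoked again to color all of $G_0$ with $\{1_a,1_b,2\}$. Your outline does not set up (B) to be usable in this one-sided way, and the symmetric merge you sketch cannot be repaired with the stated hypotheses.
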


By Proposition~\ref{extension}, we also have the following immediate corollary, which confirms Conjecture~\ref{conjbkrw} for subcubic outerplanar graphs. 

\begin{cor}
If $G$ is a subcubic outerplanar graph, then $\chi_p(D(G))\le 5$. Moreover, if $H$ is a $2$-connected subcubic outerplanar graph, then $\chi_p(D(G))\le 4$.
\end{cor}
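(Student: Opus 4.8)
The plan is to chain the two colorability theorems of this paper together with Proposition~\ref{extension} and one elementary monotonicity observation, so that the entire corollary reduces to a short computation of sequences.

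First I would record the monotonicity principle: if $S=(s_1,\ldots,s_k)$ and $T=(t_1,\ldots,t_k)$ are non-decreasing sequences with $t_i\le s_i$ for every $i$, then every packing $S$-coloring is also a packing $T$-coloring. This is immediate, since if $V_1,\ldots,V_k$ witnesses a packing $S$-coloring, then within each $V_i$ any two distinct vertices are at distance at least $s_i+1\ge t_i+1$, so the very same partition witnesses a packing $T$-coloring. In particular, recalling that $\chi_p(\cdot)\le m$ means admitting a packing $(1,2,\ldots,m)$-coloring, a packing $(1,3,3,5,9)$-coloring certifies $\chi_p\le 5$ (because $(1,2,3,4,5)$ is dominated term-by-term) and a packing $(1,3,3,5)$-coloring certifies $\chi_p\le 4$ (because $(1,2,3,4)$ is dominated term-by-term).

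For the first assertion, let $G$ be a subcubic outerplanar graph. By Theorem~\ref{1124}, $G$ admits a packing $(1,1,2,4)$-coloring. Applying Proposition~\ref{extension} with $S=(1,1,2,4)$ shows that $D(G)$ is packing $(1,\,2\cdot 1+1,\,2\cdot 1+1,\,2\cdot 2+1,\,2\cdot 4+1)$-colorable, that is, packing $(1,3,3,5,9)$-colorable. By the monotonicity principle this yields a packing $(1,2,3,4,5)$-coloring of $D(G)$, so $\chi_p(D(G))\le 5$. For the second assertion, let $H$ be a $2$-connected subcubic outerplanar graph. By Theorem~\ref{2connected}, $H$ is packing $(1,1,2)$-colorable, and Proposition~\ref{extension} with $S=(1,1,2)$ gives that $D(H)$ is packing $(1,3,3,5)$-colorable; monotonicity then produces a packing $(1,2,3,4)$-coloring of $D(H)$, whence $\chi_p(D(H))\le 4$.

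Since each step is a direct invocation of an already-established result, there is essentially no genuine obstacle here; the only point that must be checked with care is the term-by-term domination of the target sequences by the sequences produced by Proposition~\ref{extension}. For the first assertion this amounts to the inequalities $1\le 1$, $2\le 3$, $3\le 3$, $4\le 5$, $5\le 9$, and for the second to $1\le 1$, $2\le 3$, $3\le 3$, $4\le 5$, all of which hold. (Note that the two sequences $(1,3,3,5,9)$ and $(1,3,3,5)$ are already non-decreasing and aligned with the targets, so no reordering of color classes is even required.)
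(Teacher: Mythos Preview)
Your proof is correct and follows essentially the same route as the paper's: invoke Theorems~\ref{2connected} and~\ref{1124}, push through Proposition~\ref{extension}, and then dominate term-by-term to get the packing $(1,2,\ldots,k)$-coloring. The only cosmetic difference is that the paper first weakens the $(1,1,2,4)$-coloring to a $(1,1,2,2)$-coloring (yielding $(1,3,3,5,5)$ for $D(G)$), whereas you apply Proposition~\ref{extension} directly to $(1,1,2,4)$ (yielding $(1,3,3,5,9)$); both are dominated by $(1,2,3,4,5)$, so this is immaterial.
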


\begin{proof}
Proposition~\ref{extension} implies that if $G$ is packing $(1,1,2,2)$-colorable then $D(G)$ is packing $(1,3,3,5,5)$-colorable, which implies a packing $(1,2,3,4,5)$-coloring of $D(G)$. Similarly, since $H$ is packing $(1,1,2)$-colorable, $D(G)$ is packing $(1,3,3,5)$-colorable and thus $(1,2,3,4)$-colorable.
\end{proof}

The result of Liu, Liu, Rolek, and Yu~\cite{LLRY} implies that every subcubic planar graph with girth at least $8$ is packing $(1,1,2,2)$-colorable. We would also like to ask the following questions.

\begin{question}
Is it true that every subcubic planar graph is packing $(1,1,2,2)$-colorable?
\end{question}

\begin{question}
Is it true that every subcubic 2-connected outerplanar graph is packing $(1,2,2,2)$-colorable?
\end{question}
%The following well-known Proposition implies that if Theorem~\ref{mad2.75} is true then every subcubic planar graph with girth at least $8$ is $(1,1,2,2)$-packing colorable and thus we only need to verify the case of girth $7$ for Theorem~\ref{girth7}.

%\begin{prop}\label{girthmad}
%If $G$ is a connected planar graph with girth $g$, then $MAD(G) < 2 + \frac{4}{g-2}$.
%\end{prop}

\section{Notation and preliminaries}

We use $N_G^d(u)$ to  denote the set of all vertices that are at distance $d$ from $u$. 
We will work with {\em outerplane} graphs, that is, outerplanar graphs with a fixed drawing where all vertices are on 
the outer face.
% is a graph that has a planar drawing for which all vertices belong to the outer face of the drawing and we assume one such drawing is fixed whenever we consider an outerplanar graph. 

 A {\em block} of a graph $G$ is an inclusion maximal  subgraph with no cut vertices. By definition, each block is either $2$-connected or a $K_2$. In the former case, we call the block {\em nontrivial}. A block  in a graph $G$ is {\em pendant} if it contains at most  one cut vertex of $G$.

 Given an outerplane graph $G$,
{\em the weak dual graph,} $\mathcal{T}(G)$, is the graph that has a vertex for every bounded face of the embedding, and an edge for every pair of bounded faces sharing at least one edge. 
 Below when we say "face" we will mean an internal face, unless we explicitly say "outer face". 
For a face $F$ in an outerplane graph $G$, we denote by $C(F)$ the chordless cycle in $G$ that bounds  $F$. It is well known that 
\begin{equation}\label{op}
\mbox{\em a plane graph is outerplane if and only if its weak dual is a forest.}
\end{equation}
%The depth of a vertex $u \in T$, where $T$ is a tree with root $v$, is the distance between $u$ and $v$ in $T$.  

 By an $i$-face we will mean a face of length $i$.
 In view of~\eqref{op}, we say that an internal face $F$ in a   outerplane graph  $G$  {is} {\em pendant}, if either  $F$ corresponds to a leaf in 
 $\mathcal{T}(G)$ and $C(F)$ contains no cut vertices of $G$ or $C(F)$ induces a pendant block in $G$.

\begin{claim}\label{3face}
Each  $3$-face in a $2$-connected  {subcubic} outerplane graph is pendant.
\end{claim}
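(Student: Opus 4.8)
The plan is to use the standard fact that in a $2$-connected outerplane graph the outer face is bounded by a Hamiltonian cycle $H$, and every edge not on $H$ is a chord of $H$. Let $F$ be a $3$-face with $C(F)=xyz$. Since each of $x,y,z$ lies on $H$, it is incident to the two $H$-edges at it, and since $G$ is subcubic it is incident to at most one further edge, i.e. to at most one chord. As the three edges of the triangle $xyz$ pairwise share a vertex, a counting argument (each chord among $xy,yz,zx$ uses two of the three vertices, and each vertex lies on at most one chord, so $2\cdot\#\text{chords}\le 3$) shows that at most one edge of the triangle is a chord; equivalently, at least two edges of $xyz$ lie on $H$.

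If none of $xy,yz,zx$ is a chord, then $xyz$ is a subgraph of the cycle $H$; since a cycle cannot properly contain a cycle, this forces $H=xyz$, hence $G=K_3$. Then $\mathcal{T}(G)$ is a single vertex and $C(F)=G$ induces a block with no cut vertex, so $F$ is pendant by definition.

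If exactly one edge of $xyz$ is a chord, say $zx$, then $xy$ and $yz$ lie on $H$, so each of them borders the outer face and no internal face other than $F$. In a $2$-connected plane graph every edge borders exactly two distinct faces, so the chord $zx$ borders $F$ and exactly one further internal face $F'$. Hence $F$ is adjacent in $\mathcal{T}(G)$ only to $F'$, that is, $F$ corresponds to a leaf of $\mathcal{T}(G)$; and since $G$ is $2$-connected, $C(F)$ contains no cut vertex of $G$. By the definition of a pendant face, $F$ is pendant.

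The argument is short, and the only delicate point is the chord count: this is precisely where the subcubic hypothesis is used, in combination with the fact that every vertex of an outerplane graph lies on the outer cycle, which forces each triangle vertex to be on at most one chord and hence the triangle to have at most one chord. One should also keep in mind the degenerate case $G=K_3$, where $F$ is pendant because $C(F)$ itself is a block of $G$ with no cut vertex.
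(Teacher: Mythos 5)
Your proof is correct, and it takes a genuinely different route from the paper's. The paper argues by cases on the degrees of the triangle vertices: if some vertex of $C(F)$ has degree $2$, its two triangle edges lie on the outer face and $F$ is immediately pendant; if all three vertices have degree $3$, their three outside neighbours must lie in a single component of $G-\{x,y,z\}$ (each component of that deletion needs at least two attachments by $2$-connectivity, and only three edges leave the triangle), and contracting that component produces a $K_4$-minor, contradicting outerplanarity. You instead invoke the Hamiltonian boundary cycle $H$ of the outer face of a $2$-connected outerplane graph and count chords: subcubicity forces each triangle vertex onto at most one chord, the incidence count $2\cdot\#\text{chords}\le 3$ shows the triangle has at most one chord, and you then verify the leaf condition in $\mathcal{T}(G)$ directly (handling $G=K_3$ via the pendant-block clause of the definition). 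Your version buys a slightly more explicit conclusion --- it identifies exactly which edge of $C(F)$ can be shared with another internal face, and proves the leaf property constructively rather than by contradiction --- at the cost of importing the standard Hamiltonian-outer-cycle characterization of $2$-connected outerplane graphs, which the paper's minor-based argument avoids. The one step worth stating a bit more explicitly is that the chord $zx$ does not border the outer face (because the boundary of the outer face is exactly $H$), so the second face $F'$ it borders is indeed internal; you use this implicitly and it is true, so there is no gap.
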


\begin{proof} Let $F$ be a $3$-face with $C(F)=xyzx$ in a $2$-connected  {subcubic} outerplane graph $G$. If, say $d(x)=2$, then the edges
$xy$ and $xz$ are on the boundary of the outer face, and so $F$ is pendant. 

So, suppose $d(x)=d(y)=d(z)=3$. Let $x'$, $y'$ and $z'$ be the neighbors of $x,y$ and $z$ respectively outside of $\{x,y,z\}$ (some of them can coincide). Since $G$ is $2$-connected, all $x'$, $y'$ and $z'$ are in the same component of $G-\{x,y,z\}$. But then $G$ contains a $K_4$-minor, which implies that $G$ is not outerplane, a contradiction.
\end{proof}
%We define the special block graph $\mathcal{B}(G)$ of $G$ to be the graph with each non-trivial block (blocks with at least three vertices) $b$ contract to one vertex $v_b$ (if $v$ was joining to any vertex in $b$, then $v$ has an edge to $v_b$). Obviously, for a connected outerplanar graph $G$, $\mathcal{B}(G)$ is a tree.

\section{Proof of Theorem~\ref{2connected}}

All our $(1,1,2)$-colorings will use the colors $1_a,1_b$ and $2$.

%\begin{theorem}\label{2connected}
%For every $2$-connected outerplanar subcubic graph $G$, and for every degree three vertex $v \in V(G)$, there is a packing $(1,1,2)$-coloring $f$ such that $f(v) \neq 2$. 
%\end{theorem}

%\begin{proof}
\textbf{Proof.} Suppose the theorem is false and $G$ is a smallest  $2$-connected  {subcubic} outerplane graph that has no
$(1,1,2)$-coloring. Let $n=|V(G)|$. Then $n\geq 4$, since otherwise we can color all vertices of $G$ with different colors.

\begin{claim}\label{pendant}
Each pendant  face in $G$ is a $3$-face.
\end{claim}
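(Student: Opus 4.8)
The plan is to argue from the minimality of $G$: assuming that $G$ has a pendant face $F$ with $|C(F)|=\ell\ge 4$, I will build a strictly smaller $2$-connected subcubic outerplane graph, use minimality to $(1,1,2)$-color it, and then extend the coloring to all of $G$, contradicting the choice of $G$. First I would dispose of the degenerate case: if $C(F)$ induces a pendant block then, $G$ being $2$-connected, that block is $G$ itself, so $G=C(F)=C_\ell$; but a cycle is $(1,1,2)$-colorable (properly $2$-color it with $1_a,1_b$ when $\ell$ is even; when $\ell$ is odd, give one vertex color $2$ and properly $2$-color the remaining Hamiltonian path with $1_a,1_b$), a contradiction. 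Hence $F$ corresponds to a leaf of $\mathcal{T}(G)$, so exactly one edge $uv$ of $C(F)$ is shared with another internal face, and $C(F)-uv$ is a path $P:\ u=p_0,p_1,\dots,p_{\ell-2},p_{\ell-1}=v$ whose interior vertices $p_1,\dots,p_{\ell-2}$ all have degree $2$ in $G$ (each lies only on $F$ and the outer face; a chord would contradict chordlessness of $C(F)$, and a third non-chord edge would give $F$ a second neighbor in $\mathcal{T}(G)$ or a bridge, both impossible).

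Next I set $G'=G-\{p_1,\dots,p_{\ell-2}\}$, which is outerplane, subcubic, and has $|V(G')|=n-(\ell-2)<n$. I would verify that $G'$ is $2$-connected: $uv\in E(G')$, and $G$ is recovered from $G'$ by re-attaching the ear $P$ between the \emph{adjacent} vertices $u$ and $v$, so $G'$ cannot be disconnected, and any cut vertex of $G'$ would remain a cut vertex of $G$ (it cannot separate the adjacent pair $u,v$, and deleting it leaves $P$ inside a single component). If $|V(G')|\le 3$ then $G'$ is trivially $(1,1,2)$-colorable; otherwise minimality of $G$ provides a $(1,1,2)$-coloring $f$ of $G'$. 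Since $uv\in E(G')$, we have $f(u)\ne f(v)$.

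The last step is to extend $f$ across $p_1,\dots,p_{\ell-2}$. If at least one of $f(u),f(v)$ equals $2$, or if $f(u),f(v)\in\{1_a,1_b\}$ and $\ell$ is even, then a short parity argument colors $p_1,\dots,p_{\ell-2}$ using only $1_a,1_b$ consistently with $f(u),f(v)$ (in the first case at most one of $u,v$ has color $2$, so the path only needs to avoid it at one end), creating no new color-$2$ vertex, so $f$ remains a valid $(1,1,2)$-coloring of $G$. In the remaining case, $f(u),f(v)\in\{1_a,1_b\}$ and $\ell$ is odd (hence $\ell\ge5$): give some interior vertex $p_i$ with $d(p_i,u),d(p_i,v)\ge2$ the color $2$, and properly $2$-color the two subpaths flanking $p_i$ with $\{1_a,1_b\}$. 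This is legal because every path in $G$ from $p_i$ to a vertex $w\in V(G')\setminus\{u,v\}$ passes through $u$ or $v$, so $d_G(p_i,w)\ge \min\{d_G(p_i,u),d_G(p_i,v)\}+1\ge2+1=3$, and here neither $u$ nor $v$ carries color $2$; the color-$2$ vertices inherited from $f$ keep their mutual distances. Either way $G$ is $(1,1,2)$-colorable, the desired contradiction.

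I expect the main obstacle to be precisely this final extension step: guaranteeing that enlarging the color class of $2$ never violates the distance-$3$ condition is what forces the case split on $\{f(u),f(v)\}$ and on the parity of $\ell$, together with the observation that interior vertices of $P$ are at distance at least $3$ from every vertex colored $2$ by $f$. The supporting structural facts — that $G'$ is $2$-connected and that every $p_i$ has degree $2$ — are routine but need to be stated with care.
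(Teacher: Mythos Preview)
Your proposal is correct and follows essentially the same argument as the paper: dispose of the case $G=C_\ell$, otherwise delete the degree-$2$ interior vertices of the pendant face, invoke minimality on the resulting smaller $2$-connected outerplane graph, and extend across the deleted path using $\{1_a,1_b\}$ except in the odd-length case with $\{f(u),f(v)\}=\{1_a,1_b\}$, where one interior vertex receives color $2$. The paper's write-up is terser (it simply places the extra color $2$ at $u_3$ rather than at a generic $p_i$ with $d(p_i,u),d(p_i,v)\ge 2$), but the idea and the case split are identical, and your added justifications that $G'$ is $2$-connected and that the new color-$2$ vertex is at distance $\ge 3$ from $V(G')\setminus\{u,v\}$ are sound.
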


\begin{proof} Suppose $F$ is a pendant face in $G$ with $C(F)=u_1u_2\ldots u_ku_1$ where $k\geq 4$. If
$V(G)=\{ u_1,u_2,\ldots ,u_k\}$ then we color $u_1$ with $2$ and the remaining vertices alternately with $1_a$ and $1_b$.
So suppose $d(u_1)=d(u_k)=3$ and $d(u_2)=d(u_3)=\ldots=d(u_{k-1})=2$.
 Let $G'=G-\{u_2,\ldots, u_{k-1}\}$. 
Then $G'$ is a $2$-connected outerplane graph.  By the minimality of $G$, $G'$ has  a packing $(1,1,2)$-coloring $f$. 

We can extend $f$ to  the vertices $u_2,\ldots, u_{k-1}$ by coloring them alternately with $1_a$ and $1_b$, unless $k$ is odd and
$\{f(u_1),f(u_k)\}=\{1_a,1_b\}$. In this exceptional case, assuming $f(u_1)=1_a$, $f(u_k)=1_b$, we let $f(u_2)=1_b$, $f(u_3)=2$, and color
$u_4,u_5,\ldots,u_{k-1}$ alternately with $1_a$ and $1_b$.
 Thus {when $k \ge 4$}, in all cases  , we get
a packing $(1,1,2)$-coloring of $G$, a contradiction.
\end{proof}

%For a chosen degree three vertex $v \in V(G)$, we call a packing $(1,1,2)$-coloring $f$ on $G$ such that $f(v) \neq 2$ {\em good} with respect to $v$.
\begin{claim}\label{noeven}
$G$ has no  face of even length.
\end{claim}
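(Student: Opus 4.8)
The plan is to use the minimality of $G$ together with a reducible configuration forced by an even face. Suppose for contradiction that $G$ has a face of even length. By Claim~\ref{pendant} no pendant face is even, and by~\eqref{op} every leaf of the weak dual $\mathcal{T}(G)$ is a pendant face, hence a $3$-face; so every even face is a non-leaf of $\mathcal{T}(G)$. Fix a leaf $\ell$ of $\mathcal{T}(G)$ and choose, among all even faces, one face $F$ with $C(F)=u_1u_2\cdots u_ku_1$ at maximum distance from $\ell$ in $\mathcal{T}(G)$. Then every face in a component of $\mathcal{T}(G)-F$ not containing $\ell$ is odd, so on ``most sides'' of $F$ the graph is built from odd faces terminating in pendant $3$-faces; this is what will control the local structure near $F$.

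The main reduction is to turn $F$ into an odd face. Suppose first that some $u_i$ has degree $2$; as in the proof of Claim~\ref{3face}, outerplanarity gives $u_{i-1}u_{i+1}\notin E(G)$, so contracting $u_iu_{i+1}$ produces a simple graph $G'$ that is again $2$-connected, subcubic and outerplane, has $n-1$ vertices, and in which $F$ now has length $k-1$ (odd). By minimality $G'$ has a $(1,1,2)$-coloring $f'$; I would keep the colors of all vertices except $u_i,u_{i+1}$, assign $u_{i+1}$ the color of the contracted vertex (the only distances that change go through $u_{i-1}$ and only increase, so this remains valid except possibly against $u_i$ itself), and finally color $u_i$, whose only neighbors are $u_{i-1},u_{i+1}$. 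A color is available for $u_i$ unless $\{f(u_{i-1}),f(u_{i+1})\}=\{1_a,1_b\}$ and the forced choice $f(u_i)=2$ conflicts with a color-$2$ vertex within distance two; in that exceptional case I would either re-select the contracted edge among the degree-$2$ vertices of $C(F)$, or recolor the maximal run of degree-$2$ vertices of $C(F)$ through $u_i$, using that the part of $G$ beyond $F$ is made of odd faces down to pendant $3$-faces to limit where color $2$ can appear.

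If instead every vertex of $C(F)$ has degree $3$, then every edge of $C(F)$ is shared with an internal face and, by the extremal choice of $F$, some such neighboring face begins an all-odd subtree of $\mathcal{T}(G)$ that contains a $3$-face $T=xyz$ which is pendant in $G$, with $z$ of degree $2$. Deleting $z$ (after checking, via $2$-connectivity and the degree-$2$ criterion, that $G-z$ stays $2$-connected, the few exceptions being handled by a different reduction) gives a smaller $2$-connected subcubic outerplane graph; color it by minimality and extend to $z$, the obstruction again being only $\{f(x),f(y)\}=\{1_a,1_b\}$, which I would repair by a local recolor of $T$ together with its parent face. In every case $G$ receives a $(1,1,2)$-coloring, contradicting the choice of $G$.

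The step I expect to be the main obstacle is the color extension: because color $2$ must form a distance-$3$ independent set, the naive extension fails exactly when the two neighbors of the contracted or deleted vertex use both of $1_a$ and $1_b$, and handling this --- either by a smarter choice of the reduced edge or by a bounded local recoloring --- is where the extremal choice of $F$ and the ``odd faces down to pendant $3$-faces'' structure must be used with care.
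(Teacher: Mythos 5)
There is a genuine gap here, and it sits exactly where you flag it: the color-$2$ extension. Your plan never actually uses the evenness of $F$ in a decisive way. Contracting $u_iu_{i+1}$ to make $F$ odd does nothing for the minimality argument --- you color $G'$ by minimality regardless of the parity of its faces --- so the real content of your first reduction is just ``delete a degree-$2$ vertex and extend,'' and the extension fails precisely when $\{f(u_{i-1}),f(u_{i+1})\}=\{1_a,1_b\}$ and some vertex at distance two from $u_i$ already has color $2$. Your proposed repairs (re-selecting the contracted edge, or recoloring a run of degree-$2$ vertices) are not carried out, and the fallback is not guaranteed to exist: an even face can have \emph{all} its vertices of degree $3$ (e.g.\ a $4$-face each of whose edges is shared with another internal face), in which case your first reduction never applies. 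In that situation your second reduction deletes the degree-$2$ vertex $z$ of some far-away pendant triangle and tries to extend --- but this argument makes no reference to $F$ at all, so if it worked it would prove the whole theorem directly; it does not work for the same reason (the forced color $2$ at $z$ can be blocked), and ``a local recolor of $T$ together with its parent face'' is again left unproved. In short, both branches of your plan bottom out in exactly the obstruction you identify, without resolving it.

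The paper's proof uses the even face in an essentially opposite way: rather than destroying it, it exploits it as a hub. Since $G$ is $2$-connected and outerplane, each component $G_i$ of $G-V(C(F_0))$ attaches to exactly two \emph{consecutive} vertices of $C(F_0)$; because $k$ is even, the whole cycle $C(F_0)$ can be colored alternately $1_a,1_b$ with no color $2$ on it at all. Each $G_i$ is then colored recursively after adding the edge $v_iv_i'$ between its two attachment vertices (forcing $f'_i(v_i)\ne f'_i(v'_i)$), which guarantees that a swap of the names $1_a\leftrightarrow 1_b$ inside $G_i$ makes it compatible with the alternating coloring of the cycle; single-vertex components get color $2$ and are automatically at distance at least $3$ from one another. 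This decomposition is the missing idea: it avoids the color-$2$ extension problem entirely instead of trying to patch it locally.
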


\begin{proof} Suppose
 $G$ has a face $F_0$ with  $C(F_0)= u_1u_2 \ldots u_ku_1$, where $k$ is even. Let $G_1,\ldots,G_\ell$ be the components of
 $G-\{u_1,u_2 \ldots u_k\}$. Since $G$ is   $2$-connected and outerplane, each $G_i$ has exactly two neighbors on $C(F_0)$,
 and these neighbors are consecutive on $C(F_0)$. Let these neighbors be $u_{j(i)}$ and $u_{j(i)+1}$, and let the neighbors of
 $u_{j(i)}$ and $u_{j(i)+1}$ in $V(G_i)$ be $v_i$ and $v'_i$ (possibly, $v'_i=v_i$). If $v'_i\neq v_i$ and $v_iv'_{i}\notin E(G)$ then we define $G'_i=G_i+v_iv'_i$, otherwise we let $G'_i=G_i$.
 
 With these definitions, if $v_i'=v_i$  {then,} by Claim~\ref{3face}, $G'_i$ is a single vertex; otherwise, $G'_i$ is 
 a $2$-connected outerplane graph. So by the minimality of $G$, each $G'_i$ has  a packing $(1,1,2)$-coloring $f'_i$
 such that 
 \begin{equation}\label{dif}
\mbox{\em  if $v'_i\neq v_i$, then $f'_i(v'_i)\neq f'_i(v_i)$.}
 \end{equation}
 
 We now define a  $(1,1,2)$-coloring $f$ of $G$ as follows:
 
(i)  For $1 \le j \le \frac{k}{2}$, we let
  $f(u_{2j-1}) = 1_a$ and  $f(u_{2j}) = 1_b$.

(ii) If $1 \le i \le \ell$ and $v'_i=v_i$, then $G'_i$ is a single vertex $v_i$ and we let $f(v_i)=2$.

(iii) If  $1 \le i \le \ell$ and $v'_i\neq v_i$, then by~\eqref{dif} and the fact that $\{f(u_{j(i)}),f(u_{j(i)+1})\}=\{1_a,1_b\}$, we can switch
the names of the colors $1_a$ and $1_b$ in $f'_i$  so that $f'_i(v_i)\neq f(u_{j(i)}) $ and $f'_i(v'_i)\neq f(u_{j(i)+1})$. In this case,
we let $f(v)=f'_i(v)$ for each $v\in V(G_i)$.

By construction, $f$ is a packing $(1,1,2)$-coloring on $G$, since the vertices of color $2$ in different $G_i$ are at distance at least $3$.
\end{proof}

 If $G$ has only one face apart from the outer face, then $G$ is an odd cycle, say $u_1u_2 \ldots u_{2k+1}u_1$, and we can color 
 its vertices apart from $u_{2k+1}$ alternately with $1_a$ and $1_b$ and let
 $f(u_{2k+1})=2$. Thus,  suppose $G$ has at least two faces. Let $F_0$ be a pendant face  corresponding to an end vertex in a longest path in $\mathcal{T}(G)$. By  Claim~\ref{pendant}, $C(F_0)$ is a $3$-cycle.
 
 Let $F_0'$ be the unique face adjacent to $F_0$. By our choice of $F_0$, 
 \begin{equation}\label{only}
\mbox{\em  $F_0'$ is adjacent to at most one non-pendant face.}
 \end{equation}
 
If $|C(F_0')|=3$,  {then,} by Claim~\ref{3face},     $G=K_4-e$. In this case, we color the two vertices of degree $2$ in $G$ with $1_a$
and the remaining two vertices with $1_b$ and $2$.
 
Thus, we may assume $F_0'$ is a face with an odd length $k\geq 5$. Since $F_0'$ is an odd face and each face adjacent to $F_0'$ (apart from the outer face) shares exactly two vertices with $F_0'$,  at least one vertex in $C(F_0')$ has degree two in $G$.
Fix one such vertex, say $w_1$. Let $C(F_0') = w_1w_2 \ldots w_kw_1$.

%We first fix the color of vertices in the cycle by each $f(u_{2i-1}) = 1_a$ and each $f(u_{2i}) = 1_b$, where $1 \le i \le \frac{k}{2}$. Thus, if $v \in V(C(F_0))$, then $f(v) \neq 2$. We assume $v \notin V(C(F_0))$.
%, then $v \notin \{w_1, w_2\}$ \textbf{Case a:} $v \in \{w_3, w_k\}$. Say $v = w_3$. $w_3$ is a degree three vertex and
If one of $w_2, w_k$ has degree two, say $w_2$, then  we delete $w_1, w_2$ and add the edge $w_3w_k$ (it is not in $E(G)$ since $F_0'$ has length at least five). This results in a $2$-connected subcubic outerplane graph $G'$ with fewer vertices. By the minimality of $G$,  $G'$
has a packing $(1,1,2)$-coloring $f'$. If $2 \notin \{f'(w_3), f'(w_k)\}$, say $f'(w_3) = 1_a$ and $f'(w_k) = 1_b$, then we color $w_1, w_2$ with $1_a, 1_b$. If $2 \in \{f'(w_3), f'(w_k)\}$, say $f'(w_3) = 2$ and $f'(w_k) = 1_b$, then we color $w_1, w_2$ with $1_a, 1_b$.% \textbf{Case b:} $v \notin \{w_3, w_k\}$. Then we delete $w_1, w_2$ and add the edge $w_3w_k$ to obtain a $2$-connected subcubic outerplanar graph $G'$. Since $v \notin \{w_3, w_k\}$, $v$ is a degree three vertex in $G'$ and we apply induction hypothesis to obtain a packing $(1,1,2)$-coloring $f'$ such that $f'(v) \neq 2$. If $2 \in \{f'(w_3), f'(w_k)\}$, say $f'(w_3) = 2$ and $f'(w_k) = 1_a$, then we color $w_1, w_2$ with $1_b, 1_a$; if $2 \notin \{f'(w_3), f'(w_k)\}$, say $f'(w_3) = 1_a$ and $f'(w_k) = 1_b$, then we color $w_1, w_2$ with $1_a, 1_b$.

Thus, we may assume that both neighbors of $w_1$, i.e., $w_2$ and $w_k$, have degree three. Let $G_1,\ldots,G_\ell$ be the components of  $G-\{w_1,w_2 \ldots w_k\}$. Since $k\geq 5$ and $d(w_2)=d(w_k)=3$, $\ell\geq 2$.
As in the proof of Claim~\ref{noeven},
 each $G_i$ has exactly two neighbors on $C(F_0)$,
 and these neighbors are consecutive on $C(F_0)$. Let these neighbors be $w_{j(i)}$ and $w_{j(i)+1}$, and let the neighbors of
 $w_{j(i)}$ and $w_{j(i)+1}$ in $V(G_i)$ be $v_i$ and $v'_i$ (possibly, $v'_i=v_i$). We can rename $G_i$s so that 
 $j(1)<j(2)<\ldots<j(\ell)$.
 By~\eqref{only} and Claim~\ref{pendant}, at most one
 of $G_1,\ldots,G_\ell$ is not a single vertex. 
 By the  symmetry between $w_2$ and $w_k$, we may assume that $G_1$ is a single vertex.
 
 We start coloring by letting $f(w_2)=2$ and coloring the remaining vertices of $C(F'_0)$ alternately with $1_a$ and $1_b$.
 Then let $f(v_1)=1_b$ and color the unique vertex in  each other single-vertex $G_i$ with $2$. If $G-\{w_1,w_2 \ldots w_k\}$
 has no larger components, then we are done. Otherwise, suppose $G_{i_0}$ is the unique "large" component of 
 $G-\{w_1,w_2 \ldots w_k\}$.
 If  $v_{i_0}v'_{i_0}\notin E(G)$, then we define $G'_{i_0}=G_{i_0}+v_{i_0}v'_{i_0}$, otherwise we let $G'_{i_0}=G_{i_0}$.
By the minimality of $G$, $G'_{i_0}$ has  a packing $(1,1,2)$-coloring $f'$
 such that  $f'(v'_{i_0})\neq f'(v_{i_0})$. As in the proof of Claim~\ref{noeven},
  the facts that $\{f(w_{j(i_0)}),f(w_{j(i_0)+1})\}=\{1_a,1_b\}$ and $f'(v'_{i_0})\neq f'(v_{i_0})$, we can switch
the names of the colors $1_a$ and $1_b$ in $f'$  so that $f'(v_{i_0})\neq f(w_{j(i_0)}) $ and $f'(v'_{i_0})\neq f(w_{j(i_0)+1})$. 
After that, we let $f(v)=f'(v)$ for each $v\in V(G_{i_0})$.

So, we obtain a packing $(1,1,2)$-coloring of $G$. This contradicts the choice of $G$ and proves the theorem.

%%%%%%%%%%%%%%%%%%%%%%%%%%%%%%%%%%%%%%%%%%%%%%%%%%%%%%%%%%%%%%%%%%%%%%%%%%%

%\end{proof}

\section{Proof of Theorem~\ref{1124}}
%\begin{theorem}
%For every subcubic outerplanar graph $G$, there is a packing $(1,1,2,4)$-coloring $f$ such that\\
%1) If $v$ is a degree two vertex with $f(v) \in \{1_a, 1_b\}$, both neighbors are degree three vertices then $f(N(v)) \neq \{2,4\}$.\\
%2) Color $4$ is used at most once within each non-trivial block.\\
%3) If $v$ is a degree two vertex colored with $2$, then there is no vertex colored with $4$ within distance two.
%\end{theorem}
%, and all vertices in $N^2(v)$ are in the same block, 

%\begin{proof}

By a {\em feasible} coloring of $G$ we call a coloring of $G$ with colors $1_a,1_b,2,4$ such that the distance between two vertices of color $i_x$ is at least $i+1$ for all $i \in \{1, 2, 4\}$ and $x\in \{a,b\}$, and $f$ satisfies conditions (A) and (B) of Theorem~\ref{1124}.

%\textbf{Proof.} 
Suppose, Theorem~\ref{1124} fails and $G$ is a smallest outerplane graph not admitting a feasible coloring. Clearly,
\begin{equation}\label{2con}
\mbox{\em  $G$ is connected and $\delta(G)\geq 2$.}
 \end{equation}

It follows that every pendant block is nontrivial. 
So if $G$  has only one non-trivial block, then  it has no other blocks.
In this case, $G$ has  a packing $(1,1,2)$-coloring by Theorem~\ref{2connected}.
 Hence we may assume that  $G$ has at least two  blocks, and thus at least two pendant blocks (which are nontrivial).

\begin{claim}\label{pendant2}
Each pendant  face in $G$ is a $3$-face.
\end{claim}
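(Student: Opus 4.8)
The plan is to mimic the proof of Claim~\ref{pendant} from the previous section, adapting it to the new setting where we must handle four colors and the extra conditions (A) and (B). Let $F$ be a pendant face in $G$ with $C(F)=u_1u_2\ldots u_ku_1$; since $F$ is pendant, we may assume $d(u_1),d(u_k)\le 3$ (these are the only possible cut vertices) and $d(u_2)=\ldots=d(u_{k-1})=2$. Suppose for contradiction $k\ge 4$. If $C(F)$ is the whole graph, then $G$ is a cycle of length $k\ge 4$, which is easily colored by Theorem~\ref{2connected} (giving a $(1,1,2)$-coloring, which trivially satisfies (A) and (B) since color $4$ is unused). So assume $C(F)$ is a proper pendant block, sharing the single cut vertex with the rest of $G$; say $u_1$ is that cut vertex and $d(u_k)=2$ as well (or both $u_1$ and $u_k$ are the attachment, but in a pendant block there is at most one cut vertex of $G$, so exactly one of $u_1,u_k$ has degree $3$ in $G$ — say $u_1$).

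First I would form $G'=G-\{u_2,\ldots,u_k\}$, which is a smaller outerplane graph (still connected with $\delta\ge 2$ after we check $u_1$ keeps degree $\ge 2$, which holds since $u_1$ was a cut vertex). By minimality $G'$ has a feasible coloring $f$. Now I extend $f$ along the path $u_2,\ldots,u_k$. The path has $k-1\ge 3$ vertices and both ends ($u_2$ adjacent to $u_1$, and $u_k$ adjacent to $u_1$) are constrained only by $f(u_1)$ and by colors near $u_1$ in $G'$. Since the path is long (at least $3$ internal-ish vertices), there is plenty of room: color the path alternately $1_a,1_b$ whenever that avoids a conflict with $f(u_1)$ at both ends; if $k$ is even this always works after possibly swapping $1_a\leftrightarrow 1_b$ along the path; if $k$ is odd and both ends would clash, insert a single vertex of color $2$ somewhere in the middle (as in Claim~\ref{pendant}) — and place that color-$2$ vertex at distance more than $2$ from any color-$4$ vertex, which is possible because the color-$4$ vertices all lie in $G'$ (condition (A) ensures at most one per block) and the path is long enough to position the $2$ away from $u_1$. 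This guarantees condition (B) for the new color-$2$ vertex, condition (A) is untouched since we add no new color $4$, and the packing constraints for colors $1_a,1_b,2,4$ are local and satisfied by the alternating/insertion scheme.

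The main obstacle I anticipate is the bookkeeping near the cut vertex $u_1$: I must ensure that when I place a color $2$ on the new path (the odd-$k$ exceptional case), it is not within distance $2$ of a color-$4$ vertex that might sit in $G'$ close to $u_1$, and also not within distance $3$ of another color $2$ in $G'$ near $u_1$. Because $k\ge 5$ in the exceptional case (odd and $\ge 4$), the path $u_2,\ldots,u_{k-1}$ has at least $4$ vertices, so I can choose the color-$2$ vertex to be, say, $u_3$ or $u_4$, which is at distance $\ge 2$ from $u_1$ along the path and at distance $\ge 2$ from $u_k$; combined with the fact that any color-$4$ vertex is in $G'$ hence at distance $\ge 2$ from $u_1$ already, this keeps it far from color $4$. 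The remaining cases ($k$ even, or $k$ odd with the two path-ends not both clashing) need no new color and are handled by pure alternation, exactly as in Claim~\ref{pendant}. Hence $G$ admits a feasible coloring, a contradiction, so every pendant face is a $3$-face.
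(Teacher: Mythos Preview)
Your argument has a genuine gap: you handle only the case where $C(F)$ induces a pendant block of $G$, but by the paper's definition a pendant face can also be a leaf of $\mathcal{T}(G)$ whose boundary cycle $C(F)$ contains no cut vertex of $G$. In that case the edge $u_1u_k$ is shared with another internal face, so both $u_1$ and $u_k$ have degree $3$; your dichotomy ``either $C(F)$ is the whole graph or $C(F)$ is a proper pendant block'' omits this situation entirely, and your subsequent assumption $d(u_k)=2$ is false there. The paper handles this missing case in one sentence by noting that the proof of Claim~\ref{pendant} goes through verbatim (delete $u_2,\ldots,u_{k-1}$, leaving a $2$-connected outerplane graph), so the fix is easy --- but as written your proof is incomplete.

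There is also a minor factual slip: in the pendant-block case, after deleting $u_2,\ldots,u_k$ the vertex $u_1$ has exactly one remaining neighbour (its neighbour $v$ outside $C(F)$), so $d_{G'}(u_1)=1$, not $\ge 2$ as you assert. This does no harm, since the minimality hypothesis ranges over all smaller subcubic outerplane graphs with no $\delta\ge 2$ restriction, but the justification you give is wrong.

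For the pendant-block case your route does work, though it differs from the paper's. You keep $u_1$ in $G'$ and must then contend with all four possible values of $f(u_1)$; the paper instead deletes $u_1$ along with $u_2,\ldots,u_k$, colours the remainder, and then \emph{chooses} $f(u_1)\in\{1_a,1_b\}\setminus\{f(v)\}$. This guarantees $f(u_1)\in\{1_a,1_b\}$ from the outset, so the cases $f(u_1)\in\{2,4\}$ never arise and conditions (A) and (B) are immediate. Your version is salvageable (when $f(u_1)\in\{2,4\}$ the path can be $2$-coloured with $1_a,1_b$ alone regardless of parity, and when $f(u_1)\in\{1_a,1_b\}$ with $k$ odd, placing the single colour $2$ at $u_3$ puts it at distance $\ge 3$ from every vertex of $G'\setminus\{u_1\}$, so both the packing constraint and (B) hold), but the paper's device of recolouring $u_1$ is cleaner and avoids the extra casework you anticipate.
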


\begin{proof} Suppose $F$ is a pendant face with $C(F)=u_1u_2\ldots u_ku_1$ where $k\geq 4$. 
If $F$ does not contain a cut vertex of $G$, then we can repeat the proof of Claim~\ref{pendant} word by word. Note that when we color $u_3$ with $2$ in the second paragraph of Claim~\ref{pendant}, it is at distance at least two from $u_1$ and $u_k$ respectively, and thus condition (B) in Theorem~\ref{1124} is satisfied.
So, suppose $u_1$ is a cut vertex of $G$, and its neighbor outside of $C(F)$ is $v$. Recall that by the definition of pendant faces,
in this case $C(F)$ induces a pendant block in $G$. 

By the minimality of $G$, $G-\{u_1,u_2,\ldots ,u_k\}$ has  a feasible coloring $f$.
We  extend $f$ to  {$G$} as follows. First choose {$f(u_1)\in \{1_a,1_b\}-f(v)$}. By symmetry, assume $f(u_1)=1_b$.
% and $f(u_2)\in \{1_a,1_b\}-f(u_1)$.
 If $k$ is even, then
we can color
$u_2,\ldots,u_k$ alternately with $1_a$ and $1_b$. Otherwise, let $f(u_2)=1_a$, $f(u_3)=2$ and color
$u_4,\ldots,u_k$ alternately with $1_b$ and $1_a$. In both cases, we obtain
a feasible coloring of $G$, a contradiction.
\end{proof}

 Let $G_0$ be one of the pendant blocks.
   Let the cut edge connecting $G_0$ and $G-G_0$ be $u_1v_1$ with $v_1 \in V(G_0)$. Let $F_0$ be the face in $G_0$ containing $v_1$
  with $C(F_0)=v_1v_2\ldots v_kv_1$.  Let $N(u_1) = \{u_2, u_3, v_1\}$. By the minimality of $G$, graph  $G'=G-G_0$ has a feasible coloring $f$.

\textbf{Case 1:} $G_0$ is a cycle. By Claim~\ref{pendant2}, $k=3$.

\textbf{Case 1.1:} $f(u_1) \in \{1_a, 1_b\}$, say $f(u_1) = 1_a$. We let $f(v_1)=1_b$, $f(v_2)=1_a$ and $f(v_3)=2$.

\textbf{Case 1.2:} $f(u_1) = 2$. Since $f$ is a feasible coloring of $G'$ and $d_{G'}(u_1)\leq 2$, by (B) in the claim of Theorem~\ref{1124},
{there is no vertex in $G-G_0$ colored with $4$ within distance two from $u_1$}. Then we  let $f(v_1)=1_a$, $f(v_2)=1_b$ and $f(v_3)=4$.

\textbf{Case 1.3:} $f(u_1) = 4$. If $2 \in \{f(u_2), f(u_3)\}$, say $f(u_2) = 2$ and $f(u_3) = 1_a$, then we recolor $u_1$ with $1_b$ and obtain Case 1.1. Thus we may assume that $\{f(u_2), f(u_3)\} = \{1_a, 1_b\}$. In this case,
Then we let $f(v_1)= 2$, $f(v_2)=1_b$ and $f(v_3)= 1_a$.

\textbf{Case 2:} $F_0$ is  adjacent in $\mathcal{T}(G_0)$ only to pendant faces. Let these faces be $F_1, \ldots F_{\ell}$ ordered so   that
the indices of the vertices in $V(F_i) \cap V(F_0)$ are larger than the indices of the vertices in $V(F_j) \cap V(F_0)$ if and only if $i > j$.
Suppose the common vertices of $C(F_0)$ and $C(F_1)$ are $v_p$ and $v_{p+1}$.
By Claim~\ref{pendant2}, each $F_i$ is a $3$-face. If $k$ is even, we can choose $f(v_1)\in \{1_a,1_b\}-f(u_1)$, then color alternately with 
$1_a$ and $1_b$ all vertices $v_2,\ldots,v_k$, and for each $i=1,\ldots,\ell$, color the unique vertex $w_i\in C(F_i)-C(F_0)$ with $2$. So we may
assume $k$ is odd.

\textbf{Case 2.1:} $f(u_1) \neq 2$. Let $f(v_p)=2$ and color alternately    with 
$1_a$ and $1_b$ all vertices  in $V(F_0)-v_p$ so that
 $f(v_1)\neq f(u_1)$. For all $2\leq i\leq \ell$, color $w_i$ with $2$ and choose  $f(w_1)\in \{1_a,1_b\}-f(v_{p+1})$.

\textbf{Case 2.2:} $f(u_1) = 2$. As in Case 1.2,  within distance two of $u_1$
 there is no vertex in $G-G_0$ colored with $4$. 
 We color vertices in $G_0$ almost as in Case 2.1, except we color $v_p$ with $4$. Observe that conditions (A) and  (B) in the claim of Theorem~\ref{1124}
hold for the new coloring, and that $v_p$ is at distance at least $2$ from $u_1$ which in turn is at distance at least {$5$} from other vertices of color $4$.

\textbf{Case 3:} $F_0$ is adjacent to some non-pendant face. Let $R$ be a pendant face of $G_0$ that has the largest distance from $F_0$ in the weak dual of $G_0$. By the description of Case 3, this distance is at least two. Let $R_0$ be the face that $R$ is adjacent to. By the choice of $R$,  $R_0$ is adjacent to only one non-pendant face, say $R_0'$. 

Let $C(R_0)=x_1x_2\ldots x_rx_1$ and
$V(R_0) \cap V(R_0') = \{x_1, x_2\}$. Let $R_1, \ldots, R_m$ be the pendant faces that are adjacent to $R_0$ arranged in the order of  $C(R_0)$.
Recall that by Claim~\ref{pendant2}, each pendant $R_i$ is a $3$-face.
Assume that for $i=1,\ldots,m$,  $V(R_i) \cap V(R_0) = \{x_{q_i}, x_{q_i + 1}\}$ and $V(R_i) \setminus V(R_0) = \{y_i\}$.

\textbf{Case 3.1:} $R_0'$  has only three vertices; say the common neighbor of $x_1$ and $x_2$  {in $R_0'$} is $x_0$. Then by our construction, $\bigcup_{i=0}^mV(R_i)\cup \{x_0\}$
comprises $V(G_0)$, and
 $x_0$ is the vertex in $G_0$ that is adjacent to $u_1$. By the minimality of $G$,  $G'$ has a feasible coloring $f$. Recall that the neighbor of $x_0$
 in $V(G-G_0)$ is $u_1$.

\textbf{Case 3.1.1:} $f(u_1) \in \{1_a, 1_b\}$.  Then we color $x_1$ with $2$ and  $x_0, x_2,x_3,\ldots,x_r$ alternately with $1_a$ and $1_b$
so that $f(x_0)\neq f(u_1)$. After that, we let $f(y_m)=4$, and $f(y_i)=2$ for all $1\leq i\leq m-1$. Then the coloring will be a packing
$(1,1,2,4)$-coloring and  the conditions (A) and (B) will hold.

\textbf{Case 3.1.2:} $f(u_1) = 2$. Since $d_{G'}(u_1) \le 2$, the distance from $u_1$ to a vertex of color $4$ in $G'$ is at least $3$.
Then we color $x_1$ with $4$ and color $x_{q_m+1}$ with $2$,  $x_0, x_2,x_3,\ldots,x_{q_m}, y_m$ alternately with $1_a$ and $1_b$, $x_{q_m+2}, \ldots, x_r$ (if $q_m+1 < r$) alternatively with $1_a$ and $1_b$.
 After that, if $m \ge 2$ then we let $f(y_i)=2$ for all $1\leq i\leq m-1$.

\textbf{Case 3.1.3:} $f(u_1) = 4$. If $\{f(u_2),f(u_3)\}\neq \{1_a,1_b\}$, then we can recolor $u_1$ with a color in 
$ \{1_a,1_b\}\setminus \{f(u_2),f(u_3)\}$ and get Case 3.1.1.
Otherwise, $2 \notin \{f(u_2), f(u_3)\}$. By Theorem~\ref{2connected}, $G_0$ has a packing $(1,1,2)$-coloring $f_0$.
Since the vertices $x_0,x_1,x_2$ have degree $3$ in $G$, the coloring $f\cup f_0$ will be a  packing $(1,1,2,4)$-coloring of $G$
satisfying (A) and (B).

\textbf{Case 3.2:} $R_0'$ has at least four vertices. For $j=1,2$, let $x'_j$  be the neighbor of $x_j$ on $C(R_0')$ distinct from $x_{3-j}$.
By the case, $x'_2 \neq x'_1$. Let $G''$ be obtained from $G-\bigcup_{i=0}^mV(R_i)$ by adding edge $x'_1x'_2$ if this edge is not in $G$.
 By the minimality of $G$,
the subcubic outerplane graph $G''$ has a feasible coloring $f$. Since $x'_1x'_2\in E(G'')$, by symmetry, we may assume $f(x'_2)\neq 2$.
We color $x_{q_1}$ with $2$ and the remaining vertices of $R_0$ alternately with 
$1_a$ and $1_b$ so that $f(x_1)\neq f(x'_1)$ and hence $f(x_2)\neq f(x'_2)$. We can provide the two last inequalities because if
 $f(x'_1), f(x'_2)\in \{1_a, 1_b\}$, then  $f(x'_1)\neq f(x'_2)$. 
 
 After that, we choose $f(y_1)\in \{1_a, 1_b\}-f(x_{q_1+1})$ and let
$f(y_j)=2$ for $j=2,3,\ldots,m$. We obtain a feasible coloring of $G$, a contradiction. This proves Theorem~\ref{1124}.

\begin{figure}[ht]\label{f1}
\hspace{5mm}
\begin{center}
%\begin{figure}
\begin{tikzpicture}[scale = 0.379, transform shape]

\node [ shape=circle, minimum size=0.1cm,  fill = black!1000, align=center] (v3) at (-11.5,6.5) {};
\node [ shape=circle, minimum size=0.1cm,  fill = black!1000, align=center] (v4) at (-10,5) {};
\node [ shape=circle, minimum size=0.1cm,  fill = black!1000, align=center] (v5) at (-7,6.5) {};
\node [ shape=circle, minimum size=0.1cm,  fill = black!1000, align=center] (v6) at (-10,6.5) {};

\node [ shape=circle, minimum size=0.1cm,  fill = black!1000, align=center] (v9) at (-4,5) {};
\node [ shape=circle, minimum size=0.1cm,  fill = black!1000, align=center] (v10) at (-5.5,3.5) {};
\node [ shape=circle, minimum size=0.1cm,  fill = black!1000, align=center] (v11) at (-5.5,6.5) {};
\node [ shape=circle, minimum size=0.1cm,  fill = black!1000, align=center] (v12) at (-7,5) {};

\node [ shape=circle, minimum size=0.1cm,  fill = black!1000, align=center] (v22) at (-4,3.5) {};
\node [ shape=circle, minimum size=0.1cm,  fill = black!1000, align=center] (v23) at (-13,3.5) {};
\node [ shape=circle, minimum size=0.1cm,  fill = black!1000, align=center] (v24) at (-13,5) {};
\node [ shape=circle, minimum size=0.1cm,  fill = black!1000, align=center] (v25) at (-11.5,3.5) {};

\node [ shape=circle, minimum size=0.1cm,  fill = black!1000, align=center] (v1) at (-8.5,8) {};
\node [ shape=circle, minimum size=0.1cm,  fill = black!1000, align=center] (v2) at (-8.5,10) {};

\node at (-7,7.15) {\huge{$y_1$}};
\node at (-7.65,5) {\huge{$y_2$}};
\node at (-6,3) {\huge{$y_4$}};
\node at (-3.25,3.5) {\huge{$y_6$}};
\node at (-3.5,5.5) {\huge{$y_5$}};
\node at (-5.5,7.15) {\huge{$y_3$}};

\node at (-2,0.5) {\huge{$z_1$}};
\node at (-4,-0.5) {\huge{$z_2$}};
\node at (-1,-0.5) {\huge{$z_3$}};
\node at (-2,-4.5) {\huge{$z_6$}};
\node at (-1,-3.5) {\huge{$z_5$}};
\node at (-4,-3.5) {\huge{$z_4$}};

\node at (-7.75,10) {\huge{$x_1$}};

\node at (-7.5,8) {\huge{$x_4$}};

\draw  (v2) edge (v1);
\draw  (v1) edge (v6);
\draw  (v1) edge (v5);
\draw  (v6) edge (v3);
\draw  (v3) edge (v24);
\draw  (v24) edge (v23);
\draw  (v23) edge (v25);
\draw  (v25) edge (v4);
\draw  (v4) edge (v6);
\draw  (v3) edge (v4);
\draw  (v24) edge (v25);
\draw  (v5) edge (v11);
\draw  (v5) edge (v12);
\draw  (v12) edge (v10);
\draw  (v10) edge (v22);
\draw  (v22) edge (v9);
\draw  (v9) edge (v11);
\draw  (v11) edge (v12);
\draw  (v10) edge (v9);

\node [ shape=circle, minimum size=0.1cm,  fill = black!1000, align=center] (v7) at (-14.5,2) {};
\node [ shape=circle, minimum size=0.1cm,  fill = black!1000, align=center] (v8) at (-14.5,0) {};
\node [ shape=circle, minimum size=0.1cm,  fill = black!1000, align=center] (v13) at (-16.5,2) {};
\node [ shape=circle, minimum size=0.1cm,  fill = black!1000, align=center] (v31) at (-15.5,-1) {};
\node [ shape=circle, minimum size=0.1cm,  fill = black!1000, align=center] (v35) at (-13.5,-1) {};
\node [ shape=circle, minimum size=0.1cm,  fill = black!1000, align=center] (v34) at (-13.5,-3) {};
\node [ shape=circle, minimum size=0.1cm,  fill = black!1000, align=center] (v32) at (-15.5,-3) {};
\node [ shape=circle, minimum size=0.1cm,  fill = black!1000, align=center] (v33) at (-14.5,-4) {};
\node [ shape=circle, minimum size=0.1cm,  fill = black!1000, align=center] (v40) at (-17.5,1) {};
\node [ shape=circle, minimum size=0.1cm,  fill = black!1000, align=center] (v36) at (-17.5,3) {};
\node [ shape=circle, minimum size=0.1cm,  fill = black!1000, align=center] (v37) at (-19.5,3) {};
\node [ shape=circle, minimum size=0.1cm,  fill = black!1000, align=center] (v39) at (-19.5,1) {};
\node [ shape=circle, minimum size=0.1cm,  fill = black!1000, align=center] (v38) at (-20.5,2) {};
\node [ shape=circle, minimum size=0.1cm,  fill = black!1000, align=center] (v14) at (-2.5,2) {};
\node [ shape=circle, minimum size=0.1cm,  fill = black!1000, align=center] (v15) at (-2.5,0) {};
\node [ shape=circle, minimum size=0.1cm,  fill = black!1000, align=center] (v16) at (-0.5,2) {};
\node [ shape=circle, minimum size=0.1cm,  fill = black!1000, align=center] (v26) at (-3.5,-1) {};
\node [ shape=circle, minimum size=0.1cm,  fill = black!1000, align=center] (v27) at (-1.5,-1) {};
\node [ shape=circle, minimum size=0.1cm,  fill = black!1000, align=center] (v30) at (-3.5,-3) {};
\node [ shape=circle, minimum size=0.1cm,  fill = black!1000, align=center] (v28) at (-1.5,-3) {};
\node [ shape=circle, minimum size=0.1cm,  fill = black!1000, align=center] (v29) at (-2.5,-4) {};
\node [ shape=circle, minimum size=0.1cm,  fill = black!1000, align=center] (v17) at (0.5,3) {};
\node [ shape=circle, minimum size=0.1cm,  fill = black!1000, align=center] (v21) at (0.5,1) {};
\node [ shape=circle, minimum size=0.1cm,  fill = black!1000, align=center] (v18) at (2.5,3) {};
\node [ shape=circle, minimum size=0.1cm,  fill = black!1000, align=center] (v20) at (2.5,1) {};
\node [ shape=circle, minimum size=0.1cm,  fill = black!1000, align=center] (v19) at (3.5,2) {};
\draw  (v23) edge (v7);
\draw  (v7) edge (v8);
\draw  (v7) edge (v13);
\draw  (v22) edge (v14);
\draw  (v14) edge (v15);
\draw  (v14) edge (v16);
\draw  (v16) edge (v17);
\draw  (v17) edge (v18);
\draw  (v18) edge (v19);
\draw  (v19) edge (v20);
\draw  (v20) edge (v21);
\draw  (v21) edge (v16);
\draw  (v17) edge (v21);
\draw  (v18) edge (v20);
\draw  (v15) edge (v26);
\draw  (v26) edge (v27);
\draw  (v15) edge (v27);
\draw  (v27) edge (v28);
\draw  (v28) edge (v29);
\draw  (v29) edge (v30);
\draw  (v30) edge (v28);
\draw  (v30) edge (v26);
\draw  (v8) edge (v31);
\draw  (v31) edge (v32);
\draw  (v32) edge (v33);
\draw  (v33) edge (v34);
\draw  (v34) edge (v35);
\draw  (v35) edge (v8);
\draw  (v31) edge (v35);
\draw  (v34) edge (v32);
\draw  (v13) edge (v36);
\draw  (v36) edge (v37);
\draw  (v37) edge (v38);
\draw  (v38) edge (v39);
\draw  (v39) edge (v40);
\draw  (v40) edge (v13);
\draw  (v36) edge (v40);
\draw  (v39) edge (v37);

\node (v47) at (-14.5,-5.5) {};
\node (v46) at (-2.5,-5.5) {};
\node (v45) at (5,2) {};
\node (v48) at (-22,2) {};

\draw  (v19) edge (v45);
\draw  (v29) edge (v46);
\draw  (v33) edge (v47);
\draw  (v38) edge (v48);
\draw  (-16.5,-5.5) rectangle (-12.5,-7.5);
\draw  (-4.5,-5.5) rectangle (-0.5,-7.5);
\draw  (-22,0) rectangle (-24,4);
\draw  (5,4) rectangle (7,0);

\node at (-14.5,-6.5) {\huge{$G_1$}};
\node at (-23,2) {\huge{$G_1$}};
\node at (-2.5,-6.5) {\huge{$G_1$}};
\node at (6,2) {\huge{$G_1$}};
\draw  (-25,9) rectangle (8,-9.5);
\node at (-8.5,-1) {\Huge{$G_2$}};

\node at (-11.5,7.25) {\huge{$t_2$}};
\node at (-9.35,5) {\huge{$t_3$}};
\node at (-10,7.25) {\huge{$t_1$}};

\node at (0.5,0.35) {\huge{$s_3$}};
\node at (-0.5,2.75) {\huge{$s_1$}};
\node at (0.5,3.65) {\huge{$s_2$}};

\end{tikzpicture}

\caption{Gadget $G_2$}\label{g2}

\end{center}

\vspace{1cm}

\begin{minipage}[b]{0.5\textwidth}
\hspace{10mm}
\begin{tikzpicture}[scale = 0.5, transform shape]
\node [ shape=circle, minimum size=0.1cm,  fill = black!1000, align=center] (v3) at (-11.5,6.5) {};
\node [ shape=circle, minimum size=0.1cm,  fill = black!1000, align=center] (v4) at (-10,5) {};
\node [ shape=circle, minimum size=0.1cm,  fill = black!1000, align=center] (v5) at (-7,6.5) {};
\node [ shape=circle, minimum size=0.1cm,  fill = black!1000, align=center] (v6) at (-10,6.5) {};

\node [ shape=circle, minimum size=0.1cm,  fill = black!1000, align=center] (v9) at (-4,5) {};
\node [ shape=circle, minimum size=0.1cm,  fill = black!1000, align=center] (v10) at (-5.5,3.5) {};
\node [ shape=circle, minimum size=0.1cm,  fill = black!1000, align=center] (v11) at (-5.5,6.5) {};
\node [ shape=circle, minimum size=0.1cm,  fill = black!1000, align=center] (v12) at (-7,5) {};

\node [ shape=circle, minimum size=0.1cm,  fill = black!1000, align=center] (v22) at (-4,3.5) {};
\node [ shape=circle, minimum size=0.1cm,  fill = black!1000, align=center] (v23) at (-13,3.5) {};
\node [ shape=circle, minimum size=0.1cm,  fill = black!1000, align=center] (v24) at (-13,5) {};
\node [ shape=circle, minimum size=0.1cm,  fill = black!1000, align=center] (v25) at (-11.5,3.5) {};

\node [ shape=circle, minimum size=0.1cm,  fill = black!1000, align=center] (v1) at (-8.5,8) {};
\node [ shape=circle, minimum size=0.1cm,  fill = black!1000, align=center] (v2) at (-8.5,10) {};

\node at (-7,7.15) {\huge{$v_1$}};
\node at (-7.65,5) {\huge{$v_2$}};
\node at (-6,3) {\huge{$v_4$}};
\node at (-3.5,3) {\huge{$v_6$}};
\node at (-3.5,5.5) {\huge{$v_5$}};
\node at (-5.5,7.15) {\huge{$v_3$}};

\node at (-10,7.15) {\huge{$u_1$}};
\node at (-11.5,7.15) {\huge{$u_2$}};
\node at (-9.35,5) {\huge{$u_3$}};
\node at (-13.5,3) {\huge{$u_6$}};
\node at (-11,3) {\huge{$u_5$}};
\node at (-13.5,5.5) {\huge{$u_4$}};

\node at (-7.75,10) {\huge{$z_6$}};
\node at (-7.75,8) {\huge{$w_1$}};

\draw  (v2) edge (v1);
\draw  (v1) edge (v6);
\draw  (v1) edge (v5);
\draw  (v6) edge (v3);
\draw  (v3) edge (v24);
\draw  (v24) edge (v23);
\draw  (v23) edge (v25);
\draw  (v25) edge (v4);
\draw  (v4) edge (v6);
\draw  (v3) edge (v4);
\draw  (v24) edge (v25);
\draw  (v5) edge (v11);
\draw  (v5) edge (v12);
\draw  (v12) edge (v10);
\draw  (v10) edge (v22);
\draw  (v22) edge (v9);
\draw  (v9) edge (v11);
\draw  (v11) edge (v12);
\draw  (v10) edge (v9);
\draw  (-14.5,9) rectangle (-2.5,2);

\node at (-8.5,3.5) {\huge{$G_1$}};
\end{tikzpicture}
\caption{Gadget $G_1$.}\label{g1}
\end{minipage}
\hspace{15mm}
\begin{minipage}[b]{0.4\textwidth}
\begin{tikzpicture}[scale=0.5, transform shape]

\node [ shape=circle, minimum size=0.1cm,  fill = black!1000, align=center] (v1) at (-11.5,6.5) {};
\node [ shape=circle, minimum size=0.1cm,  fill = black!1000, align=center] (v2) at (-12.5,4.5) {};
\node [ shape=circle, minimum size=0.1cm,  fill = black!1000, align=center] (v3) at (-10.5,4.5) {};

\draw  (v1) edge (v2);
\draw  (v2) edge (v3);
\draw  (v3) edge (v1);
\node (v4) at (-11.5,8) {};
\node (v5) at (-9,3.5) {};
\node (v6) at (-14,3.5) {};
\draw  (v1) edge (v4);
\draw  (v3) edge (v5);
\draw  (v2) edge (v6);
\draw  (-13.5,10) rectangle (-9.5,8);
\draw  (-9,4) rectangle (-5,2);
\draw  (-18,4) rectangle (-14,2);
\node at (-11.5,9) {\huge{$G_2$}};
\node at (-16,3) {\huge{$G_2$}};
\node at (-7,3) {\huge{$G_2$}};
\node at (-10,5) {\huge{$x_1$}};
\node at (-13,5) {\huge{$x_2$}};
\node at (-12,7) {\huge{$x_3$}};
\node at (-11.5,3) {\huge{$G$}};
\end{tikzpicture}

\caption{The construction.}\label{g}
\end{minipage}

\end{figure}

\section{Sharpness of the bound}\label{counterexample}
 {To show the sharpness of our result that every subcubic outerplanar graph is $(1,1,2,4)$-colorable, we show that there are subcubic outerplanar graphs that are not $(1,1,2,5)$-colorable, not $(1,2,2,4)$-colorable, and not $(1,1,3,4)$-colorable.}

\begin{figure}
\begin{minipage}[b]{0.5\textwidth}
\hspace{5mm}
\begin{tikzpicture}[scale = 0.4, transform shape]

\node [ shape=circle, minimum size=0.1cm,  fill = black!1000, align=center] (v3) at (-12,6) {};

\node [ shape=circle, minimum size=0.1cm,  fill = black!1000, align=center]  (v1) at (-13,4) {};
\node [ shape=circle, minimum size=0.1cm,  fill = black!1000, align=center]  (v2) at (-11,4) {};
\draw  (v3) edge (v1);
\draw  (v1) edge (v2);
\draw  (v2) edge (v3);

\node [ shape=circle, minimum size=0.1cm,  fill = black!1000, align=center]  (v7) at (-9.5,3) {};
\node [ shape=circle, minimum size=0.1cm,  fill = black!1000, align=center]  (v9) at (-8.5,1) {};
\node  [ shape=circle, minimum size=0.1cm,  fill = black!1000, align=center] (v8) at (-7.5,3) {};
\draw  (v2) edge (v7);
\draw  (v7) edge (v8);
\draw  (v8) edge (v9);
\draw  (v9) edge (v7);
\node  [ shape=circle, minimum size=0.1cm,  fill = black!1000, align=center] (v11) at (-14.5,3) {};
\node  [ shape=circle, minimum size=0.1cm,  fill = black!1000, align=center] (v10) at (-16.5,3) {};
\node  [ shape=circle, minimum size=0.1cm,  fill = black!1000, align=center] (v12) at (-15.5,1) {};
\draw  (v10) edge (v11);
\draw  (v11) edge (v12);
\draw  (v12) edge (v10);
\draw  (v11) edge (v1);
\node  [ shape=circle, minimum size=0.1cm,  fill = black!1000, align=center] (v13) at (-5.5,3) {};
\node  [ shape=circle, minimum size=0.1cm,  fill = black!1000, align=center] (v14) at (-4,4.5) {};
\node  [ shape=circle, minimum size=0.1cm,  fill = black!1000, align=center] (v15) at (-3.5,2.5) {};
\draw  (v8) edge (v13);
\draw  (v13) edge (v14);
\draw  (v14) edge (v15);
\draw  (v15) edge (v13);

\node [ shape=circle, minimum size=0.1cm,  fill = black!1000, align=center] (v20) at (-8.5,-0.5) {};
\node [ shape=circle, minimum size=0.1cm,  fill = black!1000, align=center] (v21) at (-9.5,-2.5) {};
\node [ shape=circle, minimum size=0.1cm,  fill = black!1000, align=center] (v22) at (-7.5,-2.5) {};
\node [ shape=circle, minimum size=0.1cm,  fill = black!1000, align=center] (v17) at (-15.5,-0.5) {};
\node [ shape=circle, minimum size=0.1cm,  fill = black!1000, align=center] (v18) at (-16.5,-2.5) {};
\node [ shape=circle, minimum size=0.1cm,  fill = black!1000, align=center] (v19) at (-14.5,-2.5) {};
\node [ shape=circle, minimum size=0.1cm,  fill = black!1000, align=center](v4) at (-12,7.5) {};
\draw  (v3) edge (v4);
\node [ shape=circle, minimum size=0.1cm,  fill = black!1000, align=center] (v5) at (-18.5,3) {};
\node [ shape=circle, minimum size=0.1cm,  fill = black!1000, align=center] (v16) at (-20.5,2.5) {};
\node [ shape=circle, minimum size=0.1cm,  fill = black!1000, align=center] (v6) at (-20,4.5) {};
\draw  (v10) edge (v5);
\draw  (v5) edge (v6);
\draw  (v6) edge (v16);
\draw  (v16) edge (v5);
\draw  (v12) edge (v17);
\draw  (v17) edge (v18);
\draw  (v18) edge (v19);
\draw  (v19) edge (v17);
\draw  (v9) edge (v20);
\draw  (v20) edge (v21);
\draw  (v21) edge (v22);
\draw  (v22) edge (v20);
\node at (-3,2) {\huge{$u_9$}};
\node at (-4.5,5) {\huge{$u_8$}};
\node at (-7.7,1) {\huge{$u_6$}};
\node at (-5.5,3.75) {\huge{$u_7$}};

\node at (-11.1,6) {\huge{$u_1$}};
\node at (-7,2.5) {\huge{$u_5$}};
\node at (-10,2.5) {\huge{$u_4$}};
\node at (-13.5,4.5) {\huge{$u_2$}};
\node at (-10.5,4.5) {\huge{$u_3$}};
\draw  (-21.5,6.5) rectangle (-2,-3.5);
\node at (-11.5,8) {\huge{$v_3$}};
\node at (-11.65,-0.5) {\Huge{$G_3$}};
\end{tikzpicture}

\caption{Gadget $G_3$.}\label{g3}
\end{minipage}
\hspace{15mm}
\begin{minipage}[b]{0.4\textwidth}
\begin{tikzpicture}[scale = 0.4, transform shape]
\node [ shape=circle, minimum size=0.1cm,  fill = black!1000, align=center] (v3) at (-12,6) {};

\node [ shape=circle, minimum size=0.1cm,  fill = black!1000, align=center]  (v1) at (-13,4) {};
\node [ shape=circle, minimum size=0.1cm,  fill = black!1000, align=center]  (v2) at (-11,4) {};
\draw  (v3) edge (v1);
\draw  (v1) edge (v2);
\draw  (v2) edge (v3);

\node at (-11.3,6) {\huge{$v_1$}};

\node at (-13.5,4.5) {\huge{$v_2$}};
\node at (-10.5,4.5) {\huge{$v_3$}};

\node (v4) at (-12,7.5) {};
\node (v5) at (-9.5,3) {};
\node (v6) at (-14.5,3) {};
\draw  (v3) edge (v4);
\draw  (v2) edge (v5);
\draw  (v1) edge (v6);
\draw  (-14.5,10) rectangle (-9.5,7.5);
\draw  (-9.5,3) rectangle (-5,0.5);
\draw  (-19,3) rectangle (-14.5,0.5);
\node at (-12,8.7) {\huge{$G_3$}};

\node at (-7.15,1.75) {\huge{$G_3$}};
\node at (-16.7,1.7) {\huge{$G_3$}};

\end{tikzpicture}
\hspace{-5mm}
\caption{The construction $H$.}\label{example-2}
\end{minipage}

\end{figure}
%\end{figure}

We {first} show that there is a subcubic outerplanar graph that is not $(1,1,2,5)$-colorable.

\begin{example}
Our construction is the graph $G$ in Figure~\ref{g}, where each of the gadgets $G_2$ is the graph in Figure~\ref{g2} without the vertex $x_1$ (the graph surrounded by the rectangle), where each of the gadgets $G_1$ used in $G_2$ is the graph in Figure~\ref{g1} without the vertex $z_6$ (the graph surrounded by the rectangle). We show that $G$ is not packing $(1,1,2,5)$-colorable.
\end{example}

\begin{claim}\label{5g1}
In any packing $(1,1,2,5)$-coloring of $G_1$ vertex  $z_6$ cannot be colored with $5$.
\end{claim}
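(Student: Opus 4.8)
The plan is to prove the stronger, local statement that every packing $(1,1,2,5)$-coloring of $G_1$ already places a vertex of color $5$ within distance $4$ of $w_1$; since $z_6$ (the neighbor of $w_1$ lying outside $G_1$, see Figure~\ref{g1}) is adjacent to $w_1$, this rules out color $5$ at $z_6$. First I would record the relevant structure: $w_1$ is adjacent to $u_1$ and to $v_1$, while $\{u_1,\dots,u_6\}$ (respectively $\{v_1,\dots,v_6\}$) spans a ``strip'' formed by the triangle $u_1u_2u_3$, the $4$-cycle $u_2u_4u_5u_3$, and the triangle $u_4u_5u_6$, glued along the edges $u_2u_3$ and $u_4u_5$. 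I only need two elementary distance facts, both read off directly from this description: inside one strip every pair of vertices is at distance at most $2$ except the pair $\{u_1,u_6\}$, which is at distance exactly $3$ (shortest path $u_1u_2u_4u_6$); and every vertex of $G_1$ is within distance $4$ of $w_1$, the extreme cases being $u_6$ and $v_6$.

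Next I would show that color $5$ must actually appear in $G_1$. In any packing $(1,1,2,5)$-coloring the three vertices of a triangle are pairwise at distance $1$, and no color class (neither $1_a$, $1_b$, $2$, nor $5$) tolerates two vertices at distance $1$, so each triangle receives three distinct colors; hence, if color $5$ were absent, each of the four triangles $u_1u_2u_3$, $u_4u_5u_6$, $v_1v_2v_3$, $v_4v_5v_6$ would have to contain a vertex of color $2$. In the left strip the color-$2$ vertex of $u_1u_2u_3$ and the color-$2$ vertex of $u_4u_5u_6$ are distinct and, both having color $2$, are at distance at least $3$; by the first distance fact the only such pair is $\{u_1,u_6\}$, so $u_1$ and $u_6$ both get color $2$. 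Symmetrically $v_1$ and $v_6$ both get color $2$. But $d(u_1,v_1)=2$ via the path $u_1w_1v_1$, contradicting the packing requirement for color $2$. So color $5$ is used on some vertex $p$ of $G_1$, and by the second distance fact $d(w_1,p)\le 4$.

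Finally, if $z_6$ received color $5$, then since $z_6w_1\in E(G)$ we would get $d(z_6,p)\le 1+d(w_1,p)\le 5<6$, violating the distance requirement for color $5$; hence $z_6$ cannot be colored $5$. I expect the only delicate part to be the distance bookkeeping inside the strip --- confirming that $\{u_1,u_6\}$ is genuinely the unique pair of strip vertices at mutual distance at least $3$, and that no vertex of $G_1$ lies farther than distance $4$ from $w_1$; once these are checked by a short finite verification, the rest of the argument is immediate.
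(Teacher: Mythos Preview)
Your argument is correct and essentially the same as the paper's: both hinge on the observation that if color $5$ is absent from $G_1$ then each of the four triangles needs a vertex of color $2$, which forces $u_1,u_6,v_1,v_6$ to be the color-$2$ vertices and yields a contradiction from $d(u_1,v_1)=2$. The only cosmetic difference is that the paper argues by direct contradiction (assume $f(z_6)=5$, deduce $5$ cannot appear in $G_1$, then derive the color-$2$ conflict), whereas you run the contrapositive (first show $5$ must appear in $G_1$, then use the distance bound to exclude $5$ at $z_6$); the underlying distance checks and combinatorics are identical.
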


\begin{proof}
Suppose $z_6$ is colored with $5$. Then no vertex in $G_1$ can be colored with $5$ {since the farthest vertices from $z_6$ are $u_6, v_6$, each of them is at distance $5$ from $z_6$.} Then exactly one vertex of each of the four triangles, $u_1u_2u_3$, $u_4u_5u_6$, $v_1v_2v_3$, $v_4v_5v_6$ is colored with $2$. But the only way to use color $2$ in $u_1u_2u_3$ and $u_4u_5u_6$ is to color vertices $u_1$ and $u_6$ with $2$, and the only way to use color $2$ in $v_1v_2v_3$ and $v_4v_5v_6$ is to color vertices $v_1$ and $v_6$ with $2$, which is impossible since $u_1$ and $v_1$ are at distance two.
\end{proof}

Suppose $G$ has a packing $(1,1,2,5)$-coloring $f$. Then
\begin{equation}\label{25}
\mbox{each of triangles in $G$ has a vertex of color $2$ or $5$.}
\end{equation}
In particular, a vertex in $\{x_1,x_2,x_3\}$ is colored with $2$ or $5$.
By symmetry, we may assume $f(x_1)\in \{2,5\}$. By Claim~\ref{5g1} applied to the top of Fig. 1, we then have  $f(x_1)=2$. 
Since triangles $y_1y_2y_3$ and $t_1t_2t_3$ are too close to each other to both have a vertex of color $5$, in view of~\eqref{25}
 one of them has a vertex of  color $2$. By symmetry, we may assume it is $y_1y_2y_3$.
  Since $y_1$ is at distance two from $x_1$, one of $y_2$ and $y_3$, say $y_2$, is colored with $2$. Then $\{y_4, y_5, y_6\}$ does not have vertices of
  color $2$, and hence  {it has} a vertex of color
  $5$. By Claim~\ref{5g1} applied to right part of Fig. 1, this vertex is not $y_6$ and thus belongs to $\{y_4,y_5\}$. Then both  triangles $s_1s_2s_3$ and $z_1z_2z_3$ have to use color $2$, and since we cannot use $2$ at $z_1$ and $s_1$ at the same time, we may assume by symmetry that $z_2$ is colored with $2$. This implies we need to use $5$ at a vertex of the triangle $z_4z_5z_6$ and this vertex must be $z_6$ since $z_4$ and $z_5$ are at distance $5$ from $y_4$, which {contradicts} Claim~\ref{5g1}.

\vspace{4mm} 

 {Now we show there is a graph that is not $(1,2,2,4)$-colorable and not $(1,1,3,4)$-colorable.}

\begin{example}
 {Our construction is the graph $H$ in Figure~\ref{example-2}, where each of the gadgets $G_3$ is the graph in Figure~\ref{g3} without the vertex $v_3$ (the graph surrounded by the rectangle). We now show that $H$ is not $(1,2,2,4)$-colorable and not $(1,1,3,4)$-colorable.}
\end{example}

\begin{claim}
 {$H$ is not $(1,2,2,4)$-colorable.}
\end{claim}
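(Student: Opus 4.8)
The plan is to show that $H$ has no packing $(1,2,2,4)$-coloring by a forcing argument that propagates a contradiction outward from the central triangle, exactly parallel to the style used for the $(1,1,2,5)$ case. First I would record the structural fact that in a $(1,2,2,4)$-coloring, each of the two colors named $2$ (call them $2_a$ and $2_b$, together with $1$ and $4$) can be used at most once in any subgraph of diameter $2$; in particular, on any triangle all four colors $1,2_a,2_b,4$ are available but a triangle can receive at most one vertex of each. Since a triangle has three vertices and only one can get color $1$, \emph{every} triangle in $H$ must contain a vertex colored $4$, or else contain both $2_a$ and $2_b$ — and more usefully, every triangle must use color $4$ on one of its vertices \emph{unless} it uses both $2_a$ and $2_b$, which two incident triangles sharing a vertex cannot simultaneously do. The key constraint to extract about the gadget $G_3$ is an analogue of Claim~\ref{5g1}: I would prove a claim of the form ``in any $(1,2,2,4)$-coloring of $G_3$, the attachment vertex $v_3$ cannot be colored $4$,'' by checking that if $v_3$ gets $4$ then no vertex of $G_3$ can get $4$ (because $G_3$ has small enough diameter from $v_3$, specifically every vertex of $G_3$ lies within distance $4$ of $v_3$), and then the five triangles inside $G_3$ would each need to use $\{2_a,2_b\}$ or a lone $2$-color, which the adjacency pattern of triangles in $G_3$ (a chain/tree of triangles sharing vertices) makes impossible — two triangles sharing a vertex cannot both contain both of $2_a,2_b$, and a single $2$-color cannot appear in two triangles at distance $\le 2$.

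Granting that claim, I would then run the global forcing argument on $H$. The central triangle $x_1x_2x_3$ must, by the triangle constraint, have a vertex colored $4$; by symmetry say $f(x_1)=4$. But $x_1$ is the attachment vertex of one of the three copies of $G_3$ (it plays the role of $v_3$ for that copy), contradicting the gadget claim — unless the way $H$ is assembled routes things slightly differently, in which case I would instead argue that $x_1$ receiving $4$ forces the adjacent $G_3$ copy to have no vertex of color $4$, and propagate the contradiction one gadget deeper: inside that $G_3$, the triangle nearest $x_1$ is forced to use $\{2_a,2_b\}$ or a $2$-color at a vertex at distance $2$ from $x_1$, which then forbids the next triangle from using that $2$-color, forcing $4$ somewhere it cannot go. The bookkeeping mirrors the $(1,1,2,5)$ argument: triangles too close together cannot share the ``expensive'' color, so a $2$-color is forced, which pushes the requirement down the chain of triangles until it reaches a triangle whose only legal slot for $4$ is the next attachment vertex, again contradicting the gadget claim applied to the next-deeper copy of $G_3$.

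The main obstacle, as in the published $(1,1,2,5)$ argument, is the distance bookkeeping inside $G_3$: one has to verify precisely which vertices of $G_3$ lie within distance $2$ and within distance $4$ of which others, so that (i) the ``$v_3 = 4$ kills all $4$'s'' step is valid, and (ii) the chain of triangles genuinely forces the propagation rather than allowing a clever reuse of $2_a$ versus $2_b$ on alternating triangles. I would handle this by labeling the vertices of $G_3$ as in Figure~\ref{g3} and checking the relevant distances triangle-by-triangle along the tree of triangles, noting at each shared vertex that the two incident triangles cannot both be ``$\{2_a,2_b\}$-triangles'' and that the lone-$2$-color option on one triangle blocks it on any triangle within distance $2$; this is finitely many cases. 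Once the gadget claim and the propagation lemma are in place, the contradiction for $H$ itself is immediate from the symmetry of the three $G_3$'s around the central triangle.
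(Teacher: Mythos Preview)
Your outline has a genuine gap in the global step and an incorrect structural premise in the gadget step.

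\textbf{Global step.} You assert that ``the central triangle $x_1x_2x_3$ must, by the triangle constraint, have a vertex colored $4$.'' This is false: with colors $1,2_a,2_b,4$ a triangle can perfectly well be colored $\{1,2_a,2_b\}$, as you yourself note one paragraph earlier. So the case where the central triangle of $H$ receives exactly $\{1,2_a,2_b\}$ is not covered by your argument, and your hedging paragraph (``unless the way $H$ is assembled routes things slightly differently\ldots'') does not address it either --- that paragraph still assumes some $x_i$ gets $4$. In the paper this omitted case is in fact the main one (their Case~1): with $v_3=2_b$ and $v_2=2_a$, the triangle $u_1u_2u_3$ inside the gadget attached to $v_3$ is forced to use $4$ somewhere (since $2_b$ is blocked throughout it), and comparing with the symmetric gadget on $v_2$ rules out $f(u_1)=4$; pushing $4$ to $u_2$ or $u_3$ then blocks both $4$ and one of the $2$-colors on the next triangle $u_4u_5u_6$, and one more step to $u_7u_8u_9$ gives the contradiction. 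None of this is captured by your ``attachment vertex cannot be $4$'' lemma.

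\textbf{Gadget step.} Your proposed proof that the attachment vertex cannot receive color $4$ relies on ``every vertex of $G_3$ lies within distance $4$ of $v_3$.'' This is not true: $u_8$ and $u_9$ (and their mirror images on the $u_2$-side) are at distance $6$ from the attachment vertex, so coloring the attachment vertex $4$ does \emph{not} kill $4$ throughout $G_3$. (Also, the triangles in $G_3$ are joined by bridges, not by shared vertices as you write.) It so happens that the conclusion ``attachment vertex $\neq 4$'' is still correct, for a much shorter reason: if the attachment vertex is $4$ then the first triangle $u_1u_2u_3$ and both second-level triangles lie entirely within distance $4$ of it, and a two-line parity argument on $\{1,2_a,2_b\}$ forces $u_2=u_3=1$, a contradiction. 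But even once repaired, this lemma alone does not finish the proof, because of the missing $\{1,2_a,2_b\}$ case above. The paper bypasses the lemma entirely and does a direct three-case split on the colors of $v_1,v_2,v_3$, each case needing only the first two or three triangles of one gadget.
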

\begin{proof}
 {Let the colors be $1, 2_a, 2_b, 4$. }

 {\textbf{Case 1:} The three colors used on $v_1, v_2, v_3$ are $1, 2_a, 2_b$. Say $v_3$ is colored with $2_b$ and $v_2$ is colored with $2_a$. Then the color $4$ must be used on the triangle $u_1u_2u_3$ and this vertex cannot be $u_1$ since otherwise we cannot color the triangle in the gadget hang on $v_2$ which corresponds to $u_1u_2u_3$ (we need to use color $4$ on this triangle as well). We assume by symmetry that $u_3$ is colored with $4$. Then one of $u_5$ and $u_6$ is colored with a color in $\{2_a, 2_b\}$, say $u_5$ is colored with $2_a$. But then we cannot use color $2_a$ and color $4$ on the triangle $u_7u_8u_9$, a contradiction.}

 {\textbf{Case 2:} The three colors used on $v_1, v_2, v_3$ are $1, 2_a, 4$. Say $v_2$ is colored with $4$ and $v_3$ is colored with $2_a$. Then the vertices $u_1, u_2, u_3$ have to choose colors from $\{1, 2_b\}$, a contradiction.} 

%Then the three colors used on $u_1, u_2, u_3$ have to be $1, 2_a, 2_b$. Then one of $u_2$ and $u_3$ is colored with a color $x \in \{2_a, 2_b\}$, say $u_3$ is colored with $x$. Then the vertices $u_4, u_5, u_6$ have to choose colors from $\{1, 2_a, 2_b\} - x$, a contradiction.

 {\textbf{Case 3:} The three colors used on $v_1, v_2, v_3$ are $2_a, 2_b, 4$. Say $v_2$ is colored with $4$ and $v_3$ is colored with $2_a$. Similarly to Case 2, we reach a contradiction.}
\end{proof}

\begin{claim}
 {$H$ is not $(1,1,3,4)$-colorable.}
\end{claim}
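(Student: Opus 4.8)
The plan is to mimic the structure of the $(1,2,2,4)$-proof by analyzing the colors that can appear on the central triangle $v_1v_2v_3$ of $H$, using the colors $1_a, 1_b, 3, 4$. The key structural facts to establish first are local constraints on a single gadget $G_3$ (the graph of Figure~\ref{g3} with $v_3$ deleted): namely, what color the attaching vertex $v_3$ of a $G_3$-copy may receive, and how the triangle structure inside $G_3$ forces colors. In particular, I expect the crucial claim to be something like: in any $(1,1,3,4)$-coloring of $G_3$, the vertex $v_3$ cannot be colored $3$ (because the nine vertices $u_1,\dots,u_9$ lie within distance $3$ of $v_3$, or within distance $3$ of each other along the path of triangles, so a single color $3$ cannot cover all of them while every triangle still needs a vertex of color $3$ or $4$). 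Since $H$ is built by attaching three copies of $G_3$ to the triangle $v_1v_2v_3$ (Figure~\ref{example-2}), this forces each vertex of the central triangle to get a color from $\{1_a,1_b,4\}$, and at most one of them can be $4$, so two of $v_1,v_2,v_3$ get the two colors $1_a,1_b$ — but then the third vertex is forced to repeat $1_a$ or $1_b$ at distance $1$, a contradiction. I would set this up as the main line and handle the remaining cases (when one of $v_1,v_2,v_3$ is colored $4$) by pushing the forcing one gadget deeper.

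Concretely, first I would establish the analogue of Claim~\ref{5g1}: in any $(1,1,3,4)$-coloring of $G_3$, $v_3$ is not colored $3$, and moreover $v_3$ is not colored $4$ unless a vertex of color $4$ is "used up" inside $G_3$ in a way that propagates. The argument for the $3$-case is the same counting argument as in the $(1,1,2,5)$ case with $5$ replaced by $3$ and the distance bound weakened: if $v_3$ is colored $3$, then within distance $3$ of $v_3$ no other vertex may be colored $3$; since the triangles of $G_3$ (there are several, chained together) each need a vertex of color $3$ or $4$, and color $4$ may be used only boundedly often at the required mutual distance $5$, we run out of options. I would isolate the exact triangles $u_1u_2u_3$, $u_4u_5u_6$, $u_7u_8u_9$, etc., and verify that the only way to satisfy all of them simultaneously (with color $3$ blocked near $v_3$) forces two color-$4$ vertices too close together.

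Then the main proof: since three disjoint copies of $G_3$ hang on $v_1,v_2,v_3$, by the claim none of $v_1,v_2,v_3$ is colored $3$. If none is colored $4$ either, then $\{f(v_1),f(v_2),f(v_3)\}\subseteq\{1_a,1_b\}$, impossible on a triangle. So exactly one, say $v_2$, is colored $4$, and $\{f(v_1),f(v_3)\}=\{1_a,1_b\}$. Now color $4$ is already used at $v_2$, so inside the $G_3$-copy attached at $v_2$ the attaching vertex (call it $u_1^{(2)}$ of that copy, in the local labelling $v_3$) is colored $4$; but the claim says the local $v_3$ of a $G_3$-copy cannot be $3$, and I will argue it cannot be $4$ here because $4$ is already used at distance $2$ (through $v_2$). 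Hence $u_1^{(2)}\in\{1_a,1_b\}$, and then I would run the internal forcing of $G_3$: each internal triangle needs $3$ or $4$, color $4$ is globally scarce near here, so color $3$ must appear in one of the innermost triangles, but the geometry (distances $\le 3$ among the relevant triangle vertices and to $u_1^{(2)}$) makes this impossible, exactly paralleling the final paragraph of the $(1,1,2,5)$ argument.

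The main obstacle I anticipate is bookkeeping the distances precisely inside $G_3$: the argument hinges on the fact that color $3$ has exclusion radius $3$ and color $4$ has exclusion radius $4$ (mutual distance $\ge 5$), so I must check that in the chain of triangles of $G_3$ the pairwise distances between the "escape" vertices (the degree-$2$ apexes of consecutive triangles) are small enough to block a single color-$3$ vertex from covering two of them, yet the path from $v_3$ through $G_3$ to its far triangles is long enough ($\ge 5$) that no single color-$4$ vertex can cover two far triangles. This is the same balancing act that makes the $(1,1,2,5)$ example work, just shifted down by one in every parameter, so once the relevant distances in Figure~\ref{g3} are tabulated the case analysis should close; I would present it as three short cases on the palette of $\{f(v_1),f(v_2),f(v_3)\}$ mirroring the $(1,2,2,4)$ proof.
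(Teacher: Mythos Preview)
Your proposed key lemma---that in any $(1,1,3,4)$-coloring of a single copy of $G_3$ together with its attaching vertex, that vertex cannot receive color $3$---is false, and the distance estimates you give for it are off. The vertices $u_7,u_8,u_9$ lie at distance $5,6,6$ from $v_3$, not within distance $3$. Concretely, if you set $f(v_3)=3$ and $f(u_1)=4$, then you may color one of $u_5,u_6$ with $3$ (they are at distance $4$ from $v_3$) and put color $4$ on a degree-$2$ vertex of each leaf triangle; these leaf vertices are at distance $5$ from $u_1$ and at pairwise distance $\ge 5$ from one another, so no two $4$'s are too close. This extends to a legitimate $(1,1,3,4)$-coloring of the isolated gadget, so your plan to deduce ``none of $v_1,v_2,v_3$ is colored $3$'' from a purely local lemma cannot work.

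The paper's proof does not isolate a single gadget. It simply takes a vertex of the central triangle, say $v_3$, that carries color $3$ or $4$. In the case $f(v_3)=3$ it uses the \emph{neighbouring} gadget attached at $v_2$ to rule out $f(u_1)=4$: if $u_1$ were $4$, the first triangle of the $v_2$-gadget would lie within distance $3$ of $v_3$ (so no $3$ there) and within distance $4$ of $u_1$ (so no $4$ there), leaving only $\{1_a,1_b\}$ for three mutually adjacent vertices. With $u_1\neq 4$ in hand, the chain $u_1u_2u_3\to u_4u_5u_6\to u_7u_8u_9$ inside the $v_3$-gadget collapses in exactly the way you sketched. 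The case $f(v_3)=4$ is then a short local push along the same chain. The missing ingredient in your plan is precisely this cross-gadget blocking of $u_1=4$; once you insert it, the remainder of your outline goes through.
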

\begin{proof}
 {Let the colors be $1_a, 1_b, 3,4$. Since $v_1v_2v_3$ is a triangle, at least one of the vertices $v_1, v_2, v_3$, say $v_3$, is colored with a color in $\{3,4\}$.}

 {\textbf{Case 1:} $v_3$ is colored with $3$. Then $u_1$ cannot be colored with $4$ since otherwise the  triangle in the gadget hang on $v_2$ which corresponds to $u_1u_2u_3$  can only choose colors from $\{1_a, 1_b\}$, a contradiction. Thus one of the vertices in $\{u_2, u_3\}$ is colored with $4$, say $u_3$. Since $v_3$ and $u_4$ have distance $3$, $u_4$ cannot be colored with $3$ and one of $u_5, u_6$ is colored with $3$, say $u_5$. But then the vertices $u_7, u_8, u_9$ have to choose colored from $\{1_a, 1_b\}$, a contradiction.}

{\textbf{Case 2:} $v_3$ is colored with $4$. Then one of the vertices in $\{u_4, u_5, u_6\}$ is colored with $3$. But then the vertices $u_7, u_8, u_9$ have to choose colors from $\{1_a, 1_b\}$, a contradiction.}

\end{proof}

{\textbf{Acknowledgement. We thank the referees for their valuable comments.}}

%%%%%%%%%%%%%%%%%%%%%%%%%%%%%%%%%%%%%%%%%%%

\end{document}